\newtheorem{thm}{Theorem}
\newtheorem{conj}{Conjecture}
\newtheorem{prop}[thm]{Proposition}
\newtheorem{cor}[thm]{Corollary}
\newtheorem{lemma}[thm]{Lemma}
\newtheorem*{defn}{Definition}
\theoremstyle{remark}
\newtheorem{rem}{Remark}
\def\Z{{\mathbb Z}}
\def\Q{{\mathbb Q}}
\def\E{{\mathbb E}}
\def\Zo{{\mathbb Z}_{0}}
\def\P{{\mathbb P}}
\def\ER{{Erd\H{o}s--R\'{e}nyi }}
\def\p{{\mathfrak p}}
\def\Cl{{\operatorname{Cl}}}
\def\Hom{{\operatorname{Hom}}}
\def\Aut{{\operatorname{Aut \;}}}
\def\Sur{{\operatorname{Sur}}}
\def\coker{{\operatorname{coker}}}
\def\deg{{\operatorname{deg}}}
\def\and{{\operatorname{and}}}
\def\outdeg{{\operatorname{outdeg}}}
\def\ord{{\operatorname{ord}}}
\def\Img{{\operatorname{Im}}}
\title[Coeulerian property for random directed graphs]{Sandpile groups and the coeulerian property for random directed graphs}
\author{Shaked Koplewitz}
\address{Mathematics Department, Yale University, New Haven, CT 06511}
\email{shaked.koplewitz@gmail.com}
\begin{document}

\begin{abstract}

We consider random directed graphs, and calculate the distribution of the cokernels of their laplacian, following the methods used by Wood in \cite{mw}. As a corollary, we show that the probability that a random digraph is coeulerian is asymptotically upper bounded by a constant around $0.43$.

\end{abstract}

\maketitle

\section{Introduction}
\label{sec:intro}

In this paper, we consider a random directed graph $\Gamma$, with multiple edges permitted.

Let $L=L(\Gamma)$ be the laplacian of $\Gamma$. We define the \textbf{total sandpile group $S(\Gamma)$} to be the cokernel $\Zo^{n}/L\Z^{n}$, where $\Zo^{n}$ is the subspace of $\Z^{n}$ composed of the vectors whose elements sum to zero.

We will calculate the distribution of the total sandpile group of a random digraph $\Gamma$ by studying $L(\Gamma)$ as a random matrix. Our argument will make use of the following independence condition:

\begin{defn}
A random variable $y$ in a ring $T$ is \textbf{$\epsilon$-balanced} if for every maximal ideal $\p$ of $T$ and $r\in T/\p$ we have $\P(y\equiv r\pmod{p})\leq 1-\epsilon$.

We say that a random digraph is \textbf{$\epsilon$-balanced} if for each pair of distinct vertices $i,j\in \Gamma$, $\deg(i,j)$ is an independent $\epsilon$-balanced random variable over $\Z$.

We say that a random matrix $M$ is \textbf{$\epsilon$-balanced} if all of its entries are independent $\epsilon$-balanced random variables.
\end{defn}

\noindent In particular, an \ER random digraph with constant edge probability is $\epsilon$-balanced.

For any prime $p$, define $Q_{p}=\prod_{k=2}^{\infty}(1-p^{-k})$, and let $Y_{p}$ be the distribution on isomorphism classes of finite abelian $p$-groups such that $\P(Y_{p}\cong G)=\frac{Q_{p}}{|G||\Aut(G)|}$ for any finite $p$-group $G$. It is shown in \cite{cl} that this is a probability distribution.

Let $Y$ be the distribution on isomorphism classes of groups given by taking the product of the $Y_{p}$ independently; it can easily be seen that $Y$ is finite with probability $1$, and $\P(Y\cong G)=\frac{Q}{|G||\Aut(G)|}$ for any group $G$, where $Q=\prod_{p\text{ prime}}Q_{p}$. (In fact, $Q=\left( \prod_{k=2}^{\infty}\zeta(k)\right)^{-1}\approx 0.4357571$).

We will show that the distribution of the total sandpile group of a random digraph converges to $Y$, in the following sense:

\begin{thm}
\label{thm:main}
Let $\epsilon>0$, and let $\Gamma=\Gamma(n)$ be an $\epsilon$-balanced random digraph on $n$ vertices. Then for any integer $a>0$ and finite abelian group $G$ with exponent dividing $a$,
\[\lim_{n\rightarrow\infty}\P(S(\Gamma(n))\otimes(\Z/a\Z)\cong G)= \P(Y\otimes(\Z/a\Z)\cong G). \]
\end{thm}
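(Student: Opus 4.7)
The plan is to follow the moment method of Wood \cite{mw}: compute the surjection moments $\E[\#\Sur(S(\Gamma(n)), G)]$ for each finite abelian group $G$ with exponent dividing $a$, show they converge to $\E[\#\Sur(Y, G)] = 1$, and invoke a moment-determines-distribution result. The identity $\#\Sur(M, G) = \#\Sur(M \otimes \Z/a\Z, G)$ whenever $\exp(G) \mid a$ means this matches moments of $S(\Gamma(n)) \otimes \Z/a\Z$ with those of $Y \otimes \Z/a\Z$.

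First I would unpack $\#\Sur(S(\Gamma), G)$ combinatorially. Since the sequence $0 \to \Zo^n \to \Z^n \to \Z \to 0$ splits, every $\phi: \Zo^n \to G$ lifts to $\tilde\phi: \Z^n \to G$, determined by $g = (\tilde\phi(e_i))_i \in G^n$ modulo the diagonal. Such a $\phi$ factors through $S(\Gamma) = \Zo^n/L\Z^n$ iff $L^T g = 0$ in $G^n$, and is surjective iff $\langle g_i - g_j \rangle = G$. Hence
\[\E[\#\Sur(S(\Gamma), G)] = \frac{1}{|G|}\sum_{\substack{g \in G^n \\ \langle g_i - g_j\rangle = G}} \P(L^T g = 0).\]

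The key structural observation specific to the directed setting is that, with the convention $L_{jj} = \sum_{i \ne j}\deg(i,j)$ and $L_{ij} = -\deg(i,j)$ for $i \ne j$ (columns of $L$ summing to zero), one computes
\[(L^T g)_j = \sum_{i \ne j}\deg(i,j)(g_j - g_i),\]
which depends only on the in-edges at vertex $j$. These variable sets are disjoint across different $j$, so the $n$ linear equations $(L^T g)_j = 0$ become \emph{mutually independent}, and
\[\P(L^T g = 0) = \prod_{j=1}^n \P\Bigl(\sum_{i \ne j}\deg(i,j)(g_j - g_i) = 0 \text{ in } G\Bigr).\]
This column-wise factorization is a genuine simplification compared to the symmetric undirected setup of \cite{mw}, where shared entries create cross-column dependencies.

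Each factor can be expanded in characters $\chi \in \widehat G$, and $\epsilon$-balance bounds the nontrivial-character contributions by $(1 - c\epsilon)$ per index $i$ with $g_j - g_i \notin \ker\chi$. Summing over $g \in G^n$ and using M\"obius inversion on the subgroup lattice of $G$ to encode surjectivity, the main contribution comes from tuples $g$ concentrated near a single value, while spread-out tuples contribute exponentially small terms in $n$. Matching this main term against the Cohen--Lenstra moment $\E[\#\Sur(Y, G)] = 1$ completes the moment computation. The main obstacle I anticipate is precisely this combinatorial-analytic step: carrying out the inclusion-exclusion and Fourier estimates tightly enough that all error terms vanish uniformly in $n$, and pinning down the exact main term. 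Once moment convergence is established, the moment-determines-distribution theorem of \cite{mw} applied to random $\Z/a\Z$-modules of bounded exponent yields the theorem.
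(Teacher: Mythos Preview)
Your overall strategy---compute the surjection moments and invoke Wood's moment-determines-distribution theorem---is correct and matches the paper's. Your column-independence observation $(L^Tg)_j=\sum_{i\neq j}\deg(i,j)(g_j-g_i)$ is valid with the in-degree convention and is a pleasant structural feature of the directed case. The paper achieves the same effect by a different device: it replaces the diagonal of $\bar L$ by independent uniform entries to obtain a fully $\epsilon$-balanced matrix $X$, notes that conditioning $X$ on $F_0X=0$ (column sums zero) recovers $\bar L$, and then applies the estimates of \cite{mw2} for $\P(\tilde F X=0)$ as black boxes. Your route avoids the conditioning trick but forces you to redo the Fourier/code analysis by hand; the paper's route is shorter precisely because it can quote existing results.

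Two genuine errors, however, would derail your argument as written.

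First, the target moment is wrong. For the distribution $Y$ with $\P(Y\cong H)=Q/(|H|\,|\Aut H|)$ one has $\E[\#\Sur(Y,G)]=1/|G|$, not $1$; this is Lemma~3.2 of \cite{mw2}, quoted in the paper. The value $1$ belongs to the Cohen--Lenstra measure weighted by $1/|\Aut H|$ alone (the ``$u=0$'' case), which is not the measure appearing here. Your own formula $\frac{1}{|G|}\sum_g\P(L^Tg=0)$ will in fact yield $1/|G|$ if you carry it through: roughly $|G|^n$ generic tuples each contribute about $|G|^{-n}$, and the prefactor $1/|G|$ remains.

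Second, your description of where the main term lives is backwards. The bulk contribution comes from \emph{spread-out} tuples $g$---those for which the $g_i$ still generate $G$ after deleting any small fraction of coordinates; these are exactly Wood's ``codes'' of linear distance, and for them each factor $\P\bigl(\sum_{i\neq j}\deg(i,j)(g_j-g_i)=0\bigr)$ is within $(1+o(1))$ of $1/|G|$. Tuples ``concentrated near a single value'' are the \emph{non}-codes: each has $\P(L^Tg=0)$ much larger than $|G|^{-n}$, but there are exponentially fewer of them. Showing their aggregate contribution is $O(e^{-cn})$ is the actual work, and it is exactly the codes/depth bookkeeping of \cite{mw,mw2} that you will end up reproducing. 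Your M\"obius/character sketch does not yet indicate how you control these terms, and aiming at the wrong main term suggests the heuristic has not been run far enough to see where the difficulty sits.
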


It is worth noting that $\Gamma(n)$ need not have the same distributions for various values of $n$, so long as they are all $\epsilon$-balanced for a constant $\epsilon$ independent of $n$.

The proof of Theorem~\ref{thm:main} will follow the approach of Wood in \cite{mw}, using results proved in \cite{mw2}.

We can state this theorem more explicitly. For any group $G$ and prime $p$, we let $G_{p}$ denote the $p$-Sylow subgroup of $G$. Let $P$ be a set of primes. For any group $G$, we define $G_{P}=\prod_{p\in P}G_{p}$. 

\begin{cor}
\label{cor:main2}
Let $\Gamma(n)$ be a sequence of random digraphs as above. Let $P$ be a finite set of primes, and let $G$ be a group such that all the prime divisors of $|G|$ are in $P$. Then
\[\lim_{n\rightarrow\infty}\P(S(\Gamma(n))_{P}\cong G) = \frac{\prod_{p\in P}Q_{p}}{|G||\Aut(G)|}. \]
\end{cor}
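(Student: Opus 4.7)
The plan is to deduce the corollary from Theorem~\ref{thm:main} by choosing a modulus $a$ adapted to $G$. Given the finite set of primes $P$ and the group $G$, I would set $a = \prod_{p\in P} p^{k_p}$ with each $k_p$ strictly larger than the $p$-part of the exponent of $G$. Then the exponent of $G$ divides $a$, so Theorem~\ref{thm:main} applies to this pair $(a,G)$.

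The key structural step is a uniqueness statement: among finite abelian $P$-groups $H$, only $H\cong G$ can satisfy $H\otimes \Z/a\Z \cong G$. Writing $H_p = \bigoplus_i \Z/p^{\lambda_i}\Z$ and $G_p = \bigoplus_i \Z/p^{\mu_i}\Z$, the isomorphism $H_p/p^{k_p}H_p \cong G_p$ forces $\min(\lambda_i,k_p) = \mu_i$ for each $i$; the strict inequality $\mu_i < k_p$ then forces $\lambda_i = \mu_i$, so $H_p = G_p$. Next, I would decompose $S(\Gamma(n))\otimes \Z/a\Z \cong (\Z/a\Z)^r \oplus \bigl(S(\Gamma(n))_P \otimes \Z/a\Z\bigr)$, where $r$ denotes the free rank of $S(\Gamma(n))$ (primes outside $P$ are coprime to $a$ and contribute nothing). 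Since $G$ contains no $\Z/a\Z$ summand (its $p$-exponent is strictly less than $p^{k_p}$), combining the decomposition with the uniqueness statement shows that $\{S(\Gamma(n))\otimes \Z/a\Z \cong G\}$ coincides with $\{r=0\}\cap\{S(\Gamma(n))_P \cong G\}$.

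The main obstacle is controlling the event $\{r\geq 1\}$. I would argue that $\P(r\geq 1)\to 0$ as $n\to\infty$ by invoking the random matrix results of \cite{mw2}: for an $\epsilon$-balanced Laplacian, the rank equals $n-1$ over $\Q$ with probability tending to $1$. Granting this, the identification in the previous paragraph yields
\[
\bigl|\P(S(\Gamma(n))_P \cong G) - \P(S(\Gamma(n))\otimes \Z/a\Z \cong G)\bigr| \leq \P(r\geq 1) \to 0.
\]

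Finally, Theorem~\ref{thm:main} gives $\P(S(\Gamma(n))\otimes \Z/a\Z \cong G) \to \P(Y\otimes \Z/a\Z \cong G)$. Applying the same uniqueness argument to the a.s.\ finite group $Y$, this limit equals $\P(Y_P \cong G) = \prod_{p\in P}\P(Y_p \cong G_p)$. Using the decompositions $|G| = \prod_p |G_p|$ and $|\Aut(G)| = \prod_p |\Aut(G_p)|$ for finite abelian groups, this product evaluates to $\prod_{p\in P} Q_p / (|G||\Aut(G)|)$, as required.
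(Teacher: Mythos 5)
Your argument is essentially the paper's: choose a modulus $a$ in which every prime of $P$ appears to a higher power than it does in $G$ (the paper takes $a=\prod_{p\in P}p^{k_{p}+1}$ with $p^{k_{p}}$ the $p$-part of $|G|$; your slightly smaller modulus keyed to the exponent works just as well), observe that for a \emph{finite} abelian group $H$ one has $H_{P}\cong G$ if and only if $H\otimes\Z/a\Z\cong G$, and then feed this into Theorem~\ref{thm:main}. The one place you diverge is in disposing of the event that $S(\Gamma(n))$ has positive free rank: you invoke a rank-$(n-1)$-over-$\Q$ statement attributed to \cite{mw2}, which is vaguer than what is actually available, whereas the paper handles this via its standing Section~2 assumption that $\Gamma$ is strongly connected (which fails only with probability $O(e^{-cn})$ by a standard Erd\H{o}s--R\'enyi argument), under which the Matrix Tree Theorem gives $|S(\Gamma)|$ finite outright. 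The conclusion and the algebra are otherwise identical, so I would simply replace your appeal to \cite{mw2} with the strong-connectivity argument already set up in the paper.
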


\noindent In other words, The distribution of $S(\Gamma(n))_{P}$ converges to that of $\prod_{p\in P}Y_{p}$ for any finite set of primes $P$. 

To see that this follows from Theorem~\ref{thm:main}, write $|G|=\prod_{p\in P}p^{k_{p}}$, and note that for any group $H$, $H_{P}\cong G$ if and only if $G\otimes (\Z/a\Z)\cong G$ where $a=\prod_{p\in P}p^{k_{p}+1}$.

We conjecture that these restrictions are unneccessary:

\begin{conj}
\label{conj:main}
Let $\Gamma(n)$ be a sequence of random digraphs as above, $G$ a finite abelian group. Then
\[ \lim_{n\rightarrow\infty}\P(S(\Gamma(n))=G)=\frac{Q}{|G||\Aut(G)|}. \]
\end{conj}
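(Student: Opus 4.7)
\section*{Proof proposal for Conjecture~\ref{conj:main}}

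The natural strategy is to bootstrap from Corollary~\ref{cor:main2}, which already handles any finite set of primes $P$, by showing that the contribution of the "tail" primes is negligible uniformly in $n$. Concretely, write $S=S(\Gamma(n))$ and decompose $S\cong S_{P}\times S_{P^{c}}$. For a fixed finite abelian group $G$, the event $S\cong G$ is the intersection of $S_{P}\cong G$ (once $P$ contains all primes dividing $|G|$) with $S_{P^{c}}=0$. Corollary~\ref{cor:main2} gives the limiting probability of the first event. So the conjecture reduces to establishing a uniform tail bound of the form
\[
\limsup_{n\to\infty}\P(S_{P^{c}}\neq 0)\;\longrightarrow\;0\qquad\text{as }P\to\{\text{all primes}\},
\]
together with the matching identity for $Y$ (which is immediate from the definition, since $\sum_{G}\frac{Q}{|G||\Aut(G)|}=1$).

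The plan for the tail bound is to exploit the fact that $|S(\Gamma)|$ equals the number of spanning arborescences of $\Gamma$ rooted at any fixed vertex (by a directed matrix-tree theorem applied to a principal minor $L'$ of $L$). By the moment machinery of \cite{mw2}, one has good uniform control of $\E[\#\Sur(S,A)]$ for any fixed finite abelian $A$; in particular $\P(p\mid |S|)=\P(S\otimes\Z/p\Z\neq 0)$ admits an upper bound, uniformly in $n$, of the shape $C\,p^{-1}(1+o(1))$ for each prime $p$, together with a moment estimate (say on $\E[p^{\rank_{\F_{p}}(S\otimes\F_{p})}]$) that is controllable for \emph{all} primes $p$ simultaneously. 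The goal is then to sum these estimates and show
\[
\sum_{p>M}\limsup_{n\to\infty}\P\bigl(p\mid |S(\Gamma(n))|\bigr)\;\xrightarrow[M\to\infty]{}\;0.
\]
If this holds, a standard union-bound argument combined with Corollary~\ref{cor:main2} (applied with $P=\{p\le M\}$) upgrades convergence on finite prime sets to convergence of the full distribution.

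The hard part is precisely the uniform tail estimate for arbitrarily large primes $p$, possibly of size comparable to or exceeding $n$. For such primes the "Cohen--Lenstra" heuristic $\P(p\mid|S|)\approx p^{-1}$ should still hold, but proving it requires controlling the singularity probability of an $\epsilon$-balanced (non-symmetric!) random matrix modulo a prime that may be huge. In Wood's undirected setting the symmetry of $L$ provides a rank-parity obstruction that strengthens the estimates; in the directed case one would instead try to bound $\P(\det L'\equiv 0\pmod p)$ directly via Littlewood--Offord / anticoncentration arguments over $\F_{p}$, handling the non-independence of entries of $L'$ (the diagonal entries being minus the column sums) by conditioning on all but one column as in \cite{mw,mw2}. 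Pushing these anticoncentration estimates to hold uniformly for all primes $p$ is where the main difficulty lies, and it is the reason the statement is left as a conjecture rather than a theorem.
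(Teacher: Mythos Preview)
The statement you are addressing is Conjecture~\ref{conj:main}, and the paper does \emph{not} prove it; it is explicitly left open. So there is no ``paper's own proof'' to compare against. Your write-up is not really a proof either, and you say as much in your final paragraph: you outline the natural reduction (via Corollary~\ref{cor:main2}) to a uniform-in-$n$ tail bound $\sum_{p>M}\limsup_{n}\P(p\mid |S(\Gamma(n))|)\to 0$, and then correctly flag that controlling $\P(\det L_i\equiv 0\pmod p)$ for primes $p$ of size comparable to or larger than $n$ is the genuine obstruction. This diagnosis matches the paper's stance: Theorem~\ref{thm:moments} and the results quoted from \cite{mw2} have constants $K,c$ depending on $a$ (hence on the primes involved), so they give nothing uniform over all primes, which is exactly why only the $\limsup$ inequalities in Corollaries~\ref{cor:coeulerian} and~\ref{cor:cyclic} are proved.

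One small correction to your outline: you write that $|S(\Gamma)|$ equals the number of spanning arborescences rooted at a fixed vertex. In the paper's notation that count is $|S_i(\Gamma)|=|\det L_i|$, while $|S(\Gamma)|=\gcd_i|S_i(\Gamma)|$ (Proposition~\ref{prop:totsandpile}). This does not harm your strategy, since $p\mid |S|$ implies $p\mid |S_i|$ for every $i$, so $\P(p\mid |S|)\le \P(\det L_i\equiv 0\pmod p)$ for any fixed $i$; but the identification you wrote is not literally correct.
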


A digraph $\Gamma$ is \textbf{coeulerian} if the total sandpile group $S(\Gamma)$ is trivial. As a result of Corollary~\ref{cor:main2}, we get:

\begin{cor}
\label{cor:coeulerian}
Let $\Gamma(n)$ be a sequence of random digraphs as above, and let $q_{n}$ be the probability that $\Gamma(n)$ is coeulerian. Then $\limsup_{n\rightarrow\infty}(q_{n})\leq Q$.
\end{cor}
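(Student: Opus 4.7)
The plan is to reduce the statement to Corollary~\ref{cor:main2} by using the trivial monotonicity: the event that the full sandpile group $S(\Gamma(n))$ is trivial is contained in the event that the $P$-part $S(\Gamma(n))_P$ is trivial, for any finite set of primes $P$. Hence
\[ q_n = \P(S(\Gamma(n))=\{e\}) \;\leq\; \P(S(\Gamma(n))_P \cong \{e\}) \]
for every finite set of primes $P$ and every $n$.

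Taking $\limsup$ in $n$ and applying Corollary~\ref{cor:main2} to the trivial group $G=\{e\}$ (which has $|G|=|\Aut(G)|=1$), the right-hand side converges to $\prod_{p\in P} Q_p$. So for every finite $P$,
\[ \limsup_{n\to\infty} q_n \;\leq\; \prod_{p\in P} Q_p. \]

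Finally, I would let $P$ exhaust all primes: taking $P = \{p : p \leq N\}$ and sending $N\to\infty$, the products $\prod_{p\in P} Q_p$ decrease to $Q = \prod_{p} Q_p$, which is a convergent product since $Q_p = 1 - O(p^{-2})$. Thus $\limsup_n q_n \leq Q$, as claimed.

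There is no real obstacle here; the only point that might deserve a line of comment is the convergence of the infinite product $\prod_p Q_p$, which is standard. In particular, no new randomness arguments are needed beyond Corollary~\ref{cor:main2}, and the bound is sharp in the sense that Conjecture~\ref{conj:main} would upgrade this $\limsup$ to a limit equal to $Q$.
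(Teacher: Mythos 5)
Your proof is correct and is exactly the argument the paper leaves implicit: the inclusion $\{S(\Gamma(n))=\{e\}\}\subseteq\{S(\Gamma(n))_P=\{e\}\}$, Corollary~\ref{cor:main2} with $G=\{e\}$, and then letting the finite set $P$ exhaust all primes using the convergence of $\prod_p Q_p$. The paper simply states the corollary without writing this out, so there is nothing to compare beyond noting that you have filled in the expected details.
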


In \cite{tvp}, Van Pham asks if the probability that an \ER random digraph $\Gamma(n,q)$ is coeulerian converges to $1$ as $n\rightarrow\infty$ if $0<q<1$ is fixed. Corollary~\ref{cor:coeulerian} answers this in the negative. We note that if Conjecture~\ref{conj:main} holds, then $\lim_{n\rightarrow\infty}(q_{n})=Q$ follows. 

Similarly, we can bound the probability that the total sandpile group is cyclic:

\begin{cor}
\label{cor:cyclic}
Let $\Gamma(n)$ be a sequence of random digraphs as above, and let $c_{n}$ be the probability that $S(\Gamma(n))$ is cyclic. Then $\limsup_{n\rightarrow\infty}(c_{n})\leq Q\prod_{p\text{ prime}}\left(1+\frac{p}{(p-1)(p^{2}-1)}\right)$. This is equal to about $0.9603461$.
\end{cor}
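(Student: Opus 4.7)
\bigskip

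\noindent\textbf{Proof plan for Corollary~\ref{cor:cyclic}.}

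The plan is to reduce the cyclic question to a prime-by-prime computation via Corollary~\ref{cor:main2}. The key observation is that a finite abelian group is cyclic if and only if each of its Sylow subgroups is cyclic. Hence, for any finite set of primes $P$, the event that $S(\Gamma(n))$ is cyclic is contained in the event that $S(\Gamma(n))_P$ is cyclic, so
\[
c_n \leq \P\bigl(S(\Gamma(n))_P \text{ is cyclic}\bigr).
\]
(When $S(\Gamma(n))$ is infinite the same inclusion holds provided we interpret ``$S(\Gamma)_P$ cyclic'' as ``the $P$-part is cyclic,'' and in any case this exceptional event has probability $o(1)$ by the rank bounds underlying Theorem~\ref{thm:main}.)

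Next, I would use Corollary~\ref{cor:main2} to pass to the limit. Since the limit distribution is $\prod_{p\in P} Y_p$ and the factors live in coprime orders, the product is cyclic iff each factor is, so
\[
\lim_{n\to\infty} \P\bigl(S(\Gamma(n))_P \text{ is cyclic}\bigr) \;=\; \prod_{p\in P}\P(Y_p \text{ is cyclic}).
\]
Swapping the limit with the (infinite) sum over cyclic $P$-groups needs a word of justification: truncate to groups of size at most $M$, apply Corollary~\ref{cor:main2} to the finite sum, and control the tail using that $\sum_G \P(Y_P\cong G)=1$ combined with $\sum_G \P(S(\Gamma(n))_P\cong G)\le 1$. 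This standard tightness argument yields the displayed identity.

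The third step is the explicit computation of $\P(Y_p \text{ cyclic})$. The cyclic $p$-groups are $0$ and $\Z/p^k\Z$ for $k\ge 1$, with $|\Aut(\Z/p^k\Z)|=p^{k-1}(p-1)$, so
\[
\P(Y_p \text{ cyclic}) \;=\; Q_p + \sum_{k\ge 1}\frac{Q_p}{p^k\cdot p^{k-1}(p-1)} \;=\; Q_p\left(1+\frac{1}{p-1}\sum_{k\ge 1}p^{-(2k-1)}\right) \;=\; Q_p\!\left(1+\frac{p}{(p-1)(p^2-1)}\right),
\]
using that $\sum_{k\ge 1}p^{-(2k-1)} = p/(p^2-1)$.

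Finally, combining the first two steps gives $\limsup_{n\to\infty}c_n \leq \prod_{p\in P} Q_p\!\left(1+\tfrac{p}{(p-1)(p^2-1)}\right)$ for every finite $P$, and letting $P$ exhaust all primes yields the claimed bound; the infinite product converges since $\tfrac{p}{(p-1)(p^2-1)}=O(p^{-3})$. The only delicate point is the interchange of limit and summation in the second step; the rest is essentially a direct application of Corollary~\ref{cor:main2} followed by a short geometric series.
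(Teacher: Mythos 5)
Your proposal is correct and follows essentially the same route as the paper: bound $c_n$ by the probability that the $P$-part is cyclic, pass to the limit via Corollary~\ref{cor:main2}, compute $\P(Y_p\text{ cyclic})$ as a geometric series over cyclic $p$-groups, and let $P$ exhaust all primes. If anything, your version is slightly more careful than the paper's brief sketch, which states $c_n \leq \prod_{p}\P(S(\Gamma(n))_p\text{ cyclic})$ over all primes without justifying that product inequality at finite $n$; your finite-$P$ formulation (where $c_n \leq \P(S_P\text{ cyclic})$ is immediate) together with the tightness argument for the limit-sum interchange makes the intended reasoning precise.
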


\begin{proof}
$S(\Gamma(n))$ is cyclic only if $S(\Gamma(n))_{p}$ is cyclic for every prime $p$. But the distribution of $S(\Gamma(n))_{p}$ converges to that of $Y_{p}$, and in particular $\P(S(\Gamma(n))_{p} \text{ cyclic})\rightarrow\P(Y_{p}\text{ cyclic})$. A straightforward calculation (taking the sum of $\frac{1}{|\Aut(G)|}$ for all cyclic $p$-groups $G$) shows that this probability is equal to $Q_{p}\left(1+\frac{p}{(p-1)(p^{2}-1)}\right)$.

As in Corollary~\ref{cor:coeulerian}, the probability that $S(\Gamma(n))$ is cyclic is bounded by $\prod_{p}\P(S(\Gamma(n))_{p} \text{ cyclic})$, which is asymptotically bounded by $\prod_{p}\P(Y_{p}\text{ cyclic})=\prod_{p}Q_{p}\left(1+\frac{p}{(p-1)(p^{2}-1)}\right)=Q\prod_{p}\left(1+\frac{p}{(p-1)(p^{2}-1)}\right)$.
\end{proof}

As before, we note that if Conjecture~\ref{conj:main} holds, then in fact 
\[\lim_{n\rightarrow\infty}c_{n}=Q\prod_{p}\left(1+\frac{p}{(p-1)(p^{2}-1)}\right).\]

\subsection{Context}

The distribution of $Y$ first arose in relation to the distribution of class groups of real quadratic number fields. In \cite{cl}, Cohen and Lenstra conjectured that the $p$-parts of the class groups of real quadratic number fields are asymptotically distributed as in $Y_{p}$ for odd primes $p$. In particular, if $S_{X}^{+}$ is the set of real fundamental discriminants $D\leq X$, they conjecture that for any $p>2$,
 
\[ \lim_{X \rightarrow \infty} \frac{\#\{D\in S^{+}_{X}\:|\:\Cl(\Q(\sqrt{D})_{p})\cong  G \} }{|S^{+}_{X}|} =\frac{Q_{p}}{|G||\Aut(G)|}.\]

Furthermore, it is shown in \cite{mw2} that if $G_{n}$ is the cokernel of an $\epsilon$-balanced $n\times (n+1)$ random matrix, the distribution of $G_{n}$ converges to $Y$ in the sense of Theorem~\ref{thm:main}. In our case, the laplacian $L$ is a random $n\times n$ matrix, but as the image of $L$ is contained in the $n-1$-dimensional subspace $\Zo^{n}$, $L$  acts like a random $n-1\times n$ matrix. 

We also note that Maples proves some related results regarding random matrices in \cite{ms}.

In \cite{mw}, the analog of Theorem~\ref{thm:main} for undirected graphs is proven. However, the sandpile group of an undirected graph comes with an associated pairing, which affects the probability of a group appearing: In particular, Theorem 1.1. of \cite{mw} says that for a family of undirected \ER random graphs with constant edge probability,
\[ \lim_{n\rightarrow\infty} (\P(S(\Gamma)_{p})\cong G)=\frac{\#\{\text{symmetric, bilinear, perfect }\phi:G\times G\rightarrow \mathbb{C}^{*}\} }{|G||\Aut(G)|}\prod_{k\geq 0}(1-p^{-2k-1}). \]
\noindent for any prime $p$ and $p$-group $G$.

Directed graphs, however, have no naturally associated pairing, and thus the distribution of their sandpile groups is not affected by this additional structure. On a more concrete level, this difference is caused by the fact that laplacians of undirected graphs are random \textit{symmetric} matrices, while those of random directed graphs are not usually symmetric. 

For random undirected graphs, the probability that the sandpile group is trivial goes to $0$, and it is conjectured in \cite{pk} that the probability that the sandpile group is cyclic converges to $\prod_{p}\prod_{i=1}^{\infty}(1-p^{-1-2i})\approx 0.7935$. This conjecture (and the heuristic that leads to the explicit product formula from \cite{pk}) originally came from the final section of \cite{cn}. In \cite{mw}, Wood shows that this is an upper bound using the analog of Theorem~\ref{thm:main} for undirected graphs. 

For the total sandpile groups of directed graphs, on the other hand, we prove that these probabilities are asymptotically bounded by constants around $0.4358$ and $0.9603$ respectively, and conjecture that they converge to these limits.

Finally, we mention that \cite{ws} considers random matrices with entries whose distribution converges to being uniform over $\Z$. To be precise, it considers the limit as $k\rightarrow\infty$ of $M_{n,m,k}$, where $M_{n,m,k}$ is a random $n\times m$ matrix whose entries are independently chosen uniformly at random in ${-k,-k+1,\dots,k}$, and calculates the distribution of the Smith normal forms of such matrices. In the special case where $m=n+1$, they get results similar to ours: In particular, consider the sequence of random group distributions $G_{n}$ given by $\lim_{k\rightarrow\infty}\coker\left(M_{n,n+1,k}\right)$. Then as $n\rightarrow\infty$, $G_{n}$ converges to the distribution given in Conjecture~\ref{conj:main}.

\textbf{Acknowledgements}.
The author is grateful to Professor Lionel Levine for suggesting the question of the probability of a random graph being coeulerian at the BIRS-CMO Oaxaca workshop on Sandpile groups. The author is also grateful to Sam Payne and Nathan Kaplan for their many helpful suggestions along the way.

This work was partially supported by NSF CAREER DMS-1149054.

\section{Preliminaries}

\subsection{Notation}
For a pair of vertices $i,j\in\Gamma$, we let $\deg(i,j)$ denote the number of edges from $i$ to $j$. We denote by $\textbf{1}$ the vector whose entries are all $1$. We use $\P$ for probability and $\E$ for expectation. We also write $[n]={1,\dots,n}$.

We use $p$ for prime numbers, and $\prod_{p}$ denotes the product over all primes. We use $1$ for the trivial group when appropriate. For a pair of groups $G$ and $H$, we let $\Hom(G,H)$ and $\Sur(G,H)$ denote the set of homomorphisms and the set of surjections from $G$ to $H$, respectively.

As mentioned above, we use $\Zo^{n}$ to denote the subspace of $\Z^{n}$ of vectors whose entries sum to zero. For a group $G$, we use $G_{p}$ to denote its $p$-Sylow subgroup.

\subsection{Laplacians}
In this section we recall the definitions of laplacians and sandpile groups. For a more thorough introduction to the subject, including the alternative description in terms of chip-firing and some lovely pictures, see \cite{lp}.

A directed graph is \textbf{strongly connected} if there is a path from any vertex to any other vertex. It can be seen that for any $\epsilon$-balanced random graph is strongly connected with probability $1-O(e^{-cn})$: For any pair of vertices $i,j$, there exists an edge $(i,j)$ independently with probability at least $\epsilon$, so we can apply a standard \ER connectivity argument. Therefore, we can assume that our graph is strongly connected when calculating the distribution of the sandpile groups. We will do so for the remainder of this paper.

Let $\Gamma$ be a strongly connected digraph with vertices ${1,\dots,n}$. Define its \textbf{laplacian} $L=L(\Gamma)$ to be the $n\times n$ matrix given by:
\begin{align*}
L_{ij}=\left\{\begin{matrix}
-\deg(i,j) & \text{ for } i\neq j,\\ 
\outdeg(i)-\deg(i,i) & \text{ for } i=j, 
\end{matrix}\right.
\end{align*}
\noindent where $\outdeg(i)$ is the number of edges leaving the vertex $i$. Note that adding loops to $\Gamma$ has no effect on $L$, so our results will all hold both with and without loops. Note also that the columns of $L$ all sum to zero, so its image is contained in $\Zo^{n}$.

Define the \textbf{laplacian at the vertex $i$}, $L_{i}$ to be the $(n-1)\times(n-1)$ matrix given by removing the $i$th row and column from $L$. The \textbf{sandpile group at vertex $i$}, is defind as $S_{i}(\Gamma)=\Z^{n-1}/L_{i}\Z^{n-1}$. By the Matrix Tree Theorem for directed graphs, $|S_{i}(\Gamma)|=|\det(L_{i})|$ is the number of spanning trees oriented towards $i$. We define the \textbf{total sandpile group} $S(\Gamma)$ to be $\Zo^{n}/L\Z^{n}$. This is the greatest common divisor of the $S_{i}$, in the following sense:

\begin{prop}
\label{prop:totsandpile}
Let $S$ be the total sandpile group of $\Gamma$, and  let $\{S_{i}\}_{i\in [n]}$ be the sandpile groups at the vertices of $\Gamma$. Then there is a canonical surjection $S_{i}\rightarrow S$ for every $i$, and $|S|=\gcd(\{|S_{i}|\}_{i\in [n]})$. Furthermore, if $G$ is a group such that there is a surjection $S_{i}\rightarrow G$ for every $i$, then there is a surjection $S\rightarrow G$.
\end{prop}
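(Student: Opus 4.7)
The plan is to prove the three claims of the proposition in sequence.

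First, for the canonical surjection $S_i \twoheadrightarrow S$, I will identify $\Zo^n$ with $\Z^{n-1}$ via the isomorphism sending $v \in \Z^{n-1}$ to the sum-zero vector obtained by inserting $-\sum_k v_k$ at position $i$. Since each column of $L$ has entries summing to zero, the $j$-th column of $L_i$ (for $j \neq i$) lifts under this identification to the full $j$-th column of $L$. Consequently $L_i\Z^{n-1}$ sits inside $L\Z^n$ as the subgroup spanned by the columns of $L$ other than column $i$, and the quotient map gives a canonical surjection $S_i = \Zo^n/L_i\Z^{n-1} \twoheadrightarrow \Zo^n/L\Z^n = S$. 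Its kernel $K_i$ is the cyclic subgroup of $S_i$ generated by the image of column $i$; write $m_i = |K_i| = |S_i|/|S|$.

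Next, for $|S| = \gcd_i |S_i|$, the same identification presents $S$ as the cokernel of the $(n-1)\times n$ matrix $L^{(i)}$ obtained by deleting the $i$-th row of $L$. Since $L^{(i)}$ has rank $n-1$, its cokernel is finite with order equal to the gcd of its $(n-1)\times(n-1)$ minors (a standard consequence of Smith normal form). One such minor, obtained by also deleting column $i$, is $|\det L_i| = |S_i|$. For the other minors I will use the rank-one structure of $\mathrm{adj}(L)$: its columns span $\ker L$ (one-dimensional, spanned by some $v$) and its rows span the left kernel $\Z\mathbf{1}^T$, so $\mathrm{adj}(L)_{ij}=cv_i$ for a single scalar $c$. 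Reading off the diagonal cofactors gives $|S_j| = |cv_j|$, and the same formula shows that every $(n-1)\times(n-1)$ minor of $L^{(i)}$ equals some $|S_j|$. Taking the gcd yields the claim.

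The subtle step, and the one I expect to be the main obstacle, is the last assertion: if $G$ is a quotient of every $S_i$, then $G$ is a quotient of $S$. The key observation is that $|S| = \gcd_i |S_i|$ combined with $|S_i| = m_i|S|$ forces $\gcd_i m_i = 1$. Hence, for each prime $p$, there is an index $i(p)$ with $p\nmid m_{i(p)}$, which makes $(K_{i(p)})_p$ trivial. The $p$-localization of the short exact sequence $0 \to K_{i(p)} \to S_{i(p)} \to S \to 0$ is then an isomorphism $(S_{i(p)})_p \xrightarrow{\sim} S_p$. Given a surjection $S_{i(p)} \twoheadrightarrow G$, taking $p$-parts produces $(S_{i(p)})_p \twoheadrightarrow G_p$, which transfers across this isomorphism to a surjection $S_p \twoheadrightarrow G_p$. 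Carrying this out at each prime dividing $|G|$ and using that surjections of finite abelian groups decompose along $p$-Sylow subgroups assembles the desired surjection $S \twoheadrightarrow G$.
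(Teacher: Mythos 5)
Your proof is correct, and it takes a genuinely different route from the paper's. The paper's proof is essentially a corollary of Farrell and Levine's Theorem 2.10, which is cited as a black box: that result already supplies a canonical element $\gamma_i\in S_i$ with $S_i/\langle\gamma_i\rangle\cong S$ and $\gcd_i\ord(\gamma_i)=1$, from which the first two claims fall out immediately, and the third is proved by the same prime-by-prime localization argument you use (choose $i$ with $p\nmid\ord(\gamma_i)$ so that $(S_i)_p\cong S_p$, then transfer the surjection on $p$-parts). What you do differently is to reprove the Farrell--Levine structural input from scratch using linear algebra. Your construction of the canonical surjection via the identification $\Zo^n\cong\Z^{n-1}$ and the observation that the columns of $L$ other than column $i$ generate the image of $L_i$ is exactly the content of (part of) their theorem, and your identification of the kernel $K_i$ as the cyclic group generated by the class of column $i$ recovers $\langle\gamma_i\rangle$. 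Your derivation of $|S|=\gcd_i|S_i|$ via Smith normal form of the $(n-1)\times n$ matrix $L^{(i)}$ and the rank-one structure of $\operatorname{adj}(L)$ (rows in the left kernel $\Z\mathbf{1}^T$, columns in $\ker L$, so $\operatorname{adj}(L)_{jk}=cv_j$) is the cleanest part: it shows directly that every maximal minor of $L^{(i)}$ is $\pm|S_j|$ for some $j$, which both gives the gcd formula and, since $|S_i|=m_i|S|$, forces $\gcd_i m_i=1$. The paper's approach is shorter because it outsources all of this; yours is self-contained and makes the mechanism visible, at the cost of redoing work that already exists in the cited reference. One small point worth making explicit: the rank-one adjugate argument implicitly uses that $L$ has rank exactly $n-1$, which holds because $\Gamma$ is assumed strongly connected (as the paper arranges just before this proposition); without that, $\operatorname{adj}(L)$ could vanish and the $v_j$ would not be defined.
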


To prove this, we will rely on the following result of Farrell and Levine:

\begin{thm}
\label{thm:totsand}
For every $i\in\Gamma$, there exists a canonical element $\gamma_{i}\in S_{i}$ such that $S_{i}/\left \langle \gamma_{i} \right \rangle\cong S$, where $\left \langle \gamma_{i} \right \rangle$ is the cyclic group generated by $\gamma_{i}$. Moreover,

\noindent $\gcd(\{\ord(\gamma_{i})\}_{i\in [n]})=1$.
\end{thm}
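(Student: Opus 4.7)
The plan has two parts, matching the two conclusions of the theorem.

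For the first part, let $\phi\colon\Zo^{n}\to\Z^{n-1}$ be the isomorphism that deletes the $i$-th coordinate; this is an isomorphism because any vector in $\Zo^{n}$ is determined by any $n-1$ of its coordinates. Since $L\Z^{n}\subset\Zo^{n}$, $\phi$ induces an isomorphism $S=\Zo^{n}/L\Z^{n}\cong\Z^{n-1}/\phi(L\Z^{n})$. The image $\phi(L\Z^{n})$ is generated by $\phi(Le_{j})$ for $j\in[n]$; for $j\neq i$ these are precisely the columns of the reduced laplacian $L_{i}$, so they span $L_{i}\Z^{n-1}$, leaving $\phi(Le_{i})$ as the sole extra generator. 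Setting $\gamma_{i}$ to be the image of $\phi(Le_{i})$ in $S_{i}$ then yields the canonical presentation $S\cong S_{i}/\langle\gamma_{i}\rangle$.

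For the second part, since $S_{i}/\langle\gamma_{i}\rangle\cong S$ we have $\ord(\gamma_{i})=|S_{i}|/|S|$, so the claim $\gcd_{i}\ord(\gamma_{i})=1$ is equivalent to $\gcd_{i}|S_{i}|=|S|$. I plan to view $S$ as the cokernel of the $(n-1)\times n$ integer matrix $\widetilde{L}_{i}:=\phi\circ L$ (concretely, $L$ with the $i$-th row deleted). This matrix has rank $n-1$ by strong connectivity, so a standard Smith normal form argument identifies $|S|$ with the gcd of the maximal $(n-1)\times(n-1)$ minors of $\widetilde{L}_{i}$; up to sign these are precisely the cofactors $C_{ik}$ of $L$ as $k$ ranges over $[n]$.

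The remaining ingredient is the classical identity that $C_{ik}$ depends only on the column index $k$. To see this I use $\operatorname{adj}(L)\cdot L=\det(L)I=0$, which forces every row of $\operatorname{adj}(L)$ into the left kernel of $L$; the column-sum relation $\mathbf{1}^{T}L=0$ together with $\rank(L)=n-1$ pins this left kernel down to $\Z\cdot\mathbf{1}^{T}$. So each row of $\operatorname{adj}(L)$ is a constant row, i.e.\ $C_{ji}$ is independent of $j$. Taking $j=k$ and applying the Matrix Tree Theorem, $|C_{ik}|=|C_{kk}|=|\det L_{k}|=|S_{k}|$, whence $|S|=\gcd_{k}|S_{k}|$, and dividing through gives $\gcd_{i}\ord(\gamma_{i})=\gcd_{i}|S_{i}|/|S|=1$ as required (division is legitimate because $|S|$ divides each $|S_{i}|$, thanks to the surjection from the first part).

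The main obstacle is nailing down the left kernel of $L$ to exactly $\Z\cdot\mathbf{1}^{T}$; this is the one spot where strong connectivity (ensuring $\rank L=n-1$) enters in an essential way, and without it all the cofactors would vanish and the statement would be vacuous. Once the cofactor identity $C_{ji}$-independent-of-$j$ is in hand, the rest is routine linear algebra over $\Z$.
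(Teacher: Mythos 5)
The paper does not actually prove Theorem~\ref{thm:totsand}: it simply cites Theorem~2.10 of Farrell--Levine \cite{fl}. Your proposal, by contrast, is a self-contained linear-algebraic proof, and it is correct. The first part is exactly the natural construction: deleting the $i$-th coordinate identifies $\Zo^{n}$ with $\Z^{n-1}$, carries $L\Z^{n}$ onto $L_{i}\Z^{n-1}+\Z\cdot\phi(Le_{i})$, and hence realizes $S$ as $S_{i}/\langle\gamma_{i}\rangle$ with $\gamma_{i}$ the class of $\phi(Le_{i})$. For the second part, the reduction to $\gcd_{k}|S_{k}|=|S|$, the determinantal-divisor computation of $|S|$ via the maximal minors of $L$ with row $i$ deleted, and the observation that those minors are $\pm C_{ik}$ are all correct. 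The key cofactor identity (that $C_{ik}$ is independent of $i$) is handled properly: since $\det L=0$, the rows of $\operatorname{adj}(L)$ lie in the left kernel, and because $\mathbf{1}^{T}L=0$ and $\rank L=n-1$ (a consequence of strong connectivity and the Matrix Tree Theorem, which guarantees some $\det L_{k}\neq 0$), that left kernel is spanned by $\mathbf{1}^{T}$, forcing each row of $\operatorname{adj}(L)$ to be constant.

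Two small presentational points. First, the Matrix Tree Theorem is actually needed earlier in your argument than you invoke it---it is what gives $\rank L=n-1$---whereas the later equality $|\det L_{k}|=|S_{k}|$ is just the standard fact that a full-rank square integer matrix has cokernel of order $|\det|$; worth reordering so the logical dependence is clear. Second, for the cofactor identity one really only needs the left kernel over $\Q$ to be $\Q\cdot\mathbf{1}^{T}$ (then integrality of $\operatorname{adj}(L)$ upgrades this automatically), so stating it as $\Z\cdot\mathbf{1}^{T}$ is fine but slightly conflates two steps. Compared with the paper's route (a black-box citation to \cite{fl}, where $\gamma_{i}$ arises from the chip-firing/rotor-routing framework), your argument is more elementary and makes the canonical generator explicit at the level of matrices, at the cost of not connecting it to the combinatorial interpretation Farrell and Levine develop.
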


\begin{proof}
See Theorem 2.10 of \cite{fl} and the conversation that precedes it.
\end{proof}

Using this, we can prove Proposition~\ref{prop:totsandpile}:
\begin{proof}
Theorem~\ref{thm:totsand} gives a canonical surjection $S_{i}\rightarrow S$ for each $S_{i}$. Furthermore, $|S|=|S_{i}/\ord(\gamma_{i})|$ for every $i$. As $\gcd(\{\ord(\gamma_{i})\}_{i\in [n]})=1$, this implies $|S|=\gcd(\{|S_{i}|\}_{i\in [n]})$.

Let $G$ be a group such that there is a surjection $S_{i}\rightarrow G$ for each $i$. To find a surjection $S\rightarrow G$, it suffices to find surjections $S_{p}\rightarrow G_{p}$ for each prime $p$.

Let $p$ be a prime. As $\gcd(\{\ord(\gamma_{i})\}_{i\in [n]})=1$, there exists some $\gamma_{i}$ for which $p\nmid\ord(\gamma_{i})$. As $S_{i}/\left \langle \gamma_{i} \right \rangle\cong S$, this implies that $S_{p}=(S_{i})_{p}$. But $S_{i}$ surjects onto $G$, and hence $(S_{i})_{p}=S_{p}$ surjects onto $G_{p}$, which completes the proof.
\end{proof}

\subsection{Coeulerian Graphs}
\label{sec:coeulerians}

In this section we provide some background on coeulerian graphs. A more thorough exposition can be found in \cite{fl}.

A digraph is called \textbf{eulerian} if it has an eulerian tour (a closed path that traverses every edge exactly once). In \cite{fl}, Farrell and Levine show the following:

\begin{prop}[Proposition 2.12 of\cite{fl}]
The following are equivalent for a strongly connected digraph $\Gamma$:
\begin{enumerate}
\item $\ker(L(\Gamma))=\Z\mathbf{1}$.
\item $|S(\Gamma)|=|S_{i}(\Gamma)|$ for all $i\in [n]$.
\item $\Gamma$ is eulerian.
\item $S(\Gamma)\cong S_{i}(\Gamma)$ for some $i\in [n]$.
\item $S_{i}(\Gamma)\cong S_{j}(\Gamma)$ for all $i,j\in [n]$.
\end{enumerate}
\end{prop}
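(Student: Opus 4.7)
The plan is to cycle through the implications using Theorem~\ref{thm:totsand}, Proposition~\ref{prop:totsandpile}, and the Markov Chain Tree theorem, which identifies a rational generator of $\ker L$ with the arborescence-count vector $(\tau_1,\dots,\tau_n)=(|S_1|,\dots,|S_n|)$. For $(1)\Leftrightarrow(3)$, strong connectivity of $\Gamma$ forces $\ker L$ to be one-dimensional over $\mathbb{Q}$; a direct computation from the definition of $L$ shows that $\mathbf{1}\in\ker L$ iff in-degree equals out-degree at every vertex, i.e., iff $\Gamma$ is eulerian, and because any integer vector in a one-dimensional rational line through $\mathbf{1}$ is an integer multiple of $\mathbf{1}$, this is equivalent to $\ker L=\Z\mathbf{1}$. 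For $(3)\Leftrightarrow(2)$, the Markov Chain Tree theorem places $(\tau_i)$ in $\ker L$ as a rational generator, so $\mathbf{1}\in\ker L$ iff all $\tau_i$ are equal; combined with $|S|=\gcd(|S_i|)$ from Proposition~\ref{prop:totsandpile}, this is equivalent to $|S|=|S_i|$ for every $i$.

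For $(2)\Leftrightarrow(4)\Leftrightarrow(5)$, I would use Theorem~\ref{thm:totsand}: the formula $|S_i|=|S|\cdot\ord(\gamma_i)$ shows that (2) is equivalent to $\gamma_i=0$ for every $i$, which in turn means the canonical surjection $S_i\to S$ of Proposition~\ref{prop:totsandpile} is an isomorphism for every $i$, yielding both (5) and (4) at once. Conversely, (5) forces all $|S_i|$ to coincide, so each equals $\gcd(|S_j|)=|S|$, returning to (2); the argument from (4) back to (2) proceeds similarly by comparing orders and using $|S|\mid|S_j|$ for every $j$.

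The main obstacle I expect is the direction $(2)\Rightarrow(3)$: the assertion that constant arborescence counts $\tau_i$ force $\Gamma$ to be eulerian is the real content of the Markov Chain Tree theorem and is the one genuinely nontrivial input. All of the other implications are essentially formal consequences of Theorem~\ref{thm:totsand} together with the GCD formula from Proposition~\ref{prop:totsandpile}, so care about conventions for $L$ (and the direction of spanning arborescences it counts) is where the work really happens.
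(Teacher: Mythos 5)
The paper does not prove this proposition; it is quoted verbatim as Proposition 2.12 of Farrell--Levine \cite{fl}, so there is no in-paper argument to compare against.

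Your sketch of $(1)\Leftrightarrow(2)\Leftrightarrow(3)\Leftrightarrow(5)$ is sound (modulo the transposition bookkeeping: with the convention in which the columns of $L$ sum to zero, so that $\Img(L)\subseteq\Zo^{n}$, it is $L\mathbf{1}=0$ that is equivalent to in-degree equalling out-degree, and the arborescence-count vector $(|S_1|,\dots,|S_n|)$ is a rational generator of $\ker L$). The genuine gap is your treatment of $(4)$. You assert that $(4)\Rightarrow(2)$ ``proceeds similarly by comparing orders and using $|S|\mid|S_j|$,'' but this does not close: knowing $|S|=|S_{i_0}|$ for a single $i_0$ together with $|S|\mid|S_j|$ for every $j$ does not force $|S|=|S_j|$ for every $j$. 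Worse, condition $(4)$ as printed in the paper does not in fact imply the others. Take the two-vertex strongly connected digraph with one edge $1\to 2$ and two edges $2\to 1$. Then $|S_1|=2$, $|S_2|=1$, so $S_1\cong\Z/2\Z$, $S_2\cong 0$, and $S\cong 0$ (since $|S|=\gcd(2,1)=1$). Here $S\cong S_2$, so $(4)$ holds with $i=2$, yet the graph is not Eulerian, $\ker L$ is spanned by $(2,1)$ rather than $\mathbf{1}$, and $(1),(2),(3),(5)$ all fail. So $(4)$ must be intended to read ``for all $i$'' (in which case it collapses to $(5)$ and your argument is fine), or else the paper has misquoted Farrell--Levine. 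The right move here is to flag the discrepancy rather than try to manufacture an implication that isn't true as written.
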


In other words, we see that the condition of being eulerian is equivalent to the kernel of each surjection $S_{i}\rightarrow S$ being trivial. Farrell and Levine define a digraph to be coeulerian when $S=1$. In \cite{fl}, they show:

\begin{thm}[Theorem 1.2 of \cite{fl}]
The following are equivalent for a strongly connected digraph $\Gamma$:
\begin{enumerate}
\item $\Img(L(\Gamma))=\Zo^{n}$.
\item $S(\Gamma)=1$
\item For all $i\in [n]$, $S_{i}(\Gamma)$ is cyclic with generator $\gamma_{i}$.
\item For some $i\in [n]$, $S_{i}(\Gamma)$ is cyclic with generator $\gamma_{i}$.
\end{enumerate}
\end{thm}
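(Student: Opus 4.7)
The plan is to derive all four equivalences from Theorem~\ref{thm:totsand}, which already packages the hard structural content (the existence of the canonical element $\gamma_i \in S_i$ with $S_i/\langle\gamma_i\rangle \cong S$ and $\gcd(\ord(\gamma_i)) = 1$). Once that is available, everything else is essentially bookkeeping about cyclic quotients.

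First I would dispatch (1)$\Leftrightarrow$(2) directly from the definition: $S(\Gamma) = \Zo^n/L\Z^n$ is trivial exactly when $L\Z^n = \Zo^n$, i.e., $\Img(L(\Gamma)) = \Zo^n$. No further input is needed. Then (3)$\Rightarrow$(4) is immediate.

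For (2)$\Leftrightarrow$(3), I would use Theorem~\ref{thm:totsand}: for each vertex $i$ we have a canonical surjection $S_i \twoheadrightarrow S$ with kernel $\langle\gamma_i\rangle$. If $S = 1$, then $S_i = \langle\gamma_i\rangle$ is cyclic (and generated by $\gamma_i$) for every $i$, giving (3). Conversely, if $S_i$ is cyclic with generator $\gamma_i$ for all $i$, then $S \cong S_i/\langle\gamma_i\rangle = 1$.

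The only slightly nontrivial direction is (4)$\Rightarrow$(2), and this is where the coprimality condition $\gcd(\ord(\gamma_i)) = 1$ could in principle be needed, but in fact isn't: if $S_i = \langle\gamma_i\rangle$ for even a single $i$, then $S \cong S_i/\langle\gamma_i\rangle = 1$ immediately. So the implication is actually formal and requires no input beyond the quotient description of $S$. The coprimality part of Theorem~\ref{thm:totsand} would come into play if one wanted a ``greatest common divisor'' style statement, as in Proposition~\ref{prop:totsandpile}, but here we only need that a single cyclic $S_i = \langle\gamma_i\rangle$ forces $S$ trivial. There is no real obstacle to overcome; the whole proof reduces to unwinding Theorem~\ref{thm:totsand}, and in fact the statement is quoted verbatim from \cite{fl} so one could simply defer to that reference.
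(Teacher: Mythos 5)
Your proof is correct. Note that the paper does not supply a proof of this theorem at all: it is quoted verbatim as Theorem 1.2 of \cite{fl}, with the reader referred to that paper. So there is nothing in the source to compare against, and supplying a proof is a genuine addition. Your reduction to Theorem~\ref{thm:totsand} is the natural one and it closes cleanly: $(1)\Leftrightarrow(2)$ is definitional since $S(\Gamma)=\Zo^n/L\Z^n$; $(3)\Rightarrow(4)$ is trivial; and both $(2)\Rightarrow(3)$ and $(4)\Rightarrow(2)$ are immediate from the isomorphism $S_i/\langle\gamma_i\rangle\cong S$, with no need for the coprimality clause $\gcd(\ord(\gamma_i))=1$. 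Your observation that the coprimality is not needed here (unlike in Proposition~\ref{prop:totsandpile}) is exactly right. One small point worth making explicit: in items (3) and (4), ``cyclic with generator $\gamma_i$'' must be read as ``cyclic \emph{and generated by the canonical element} $\gamma_i$,'' not merely ``cyclic''; your argument implicitly uses this reading in the direction $(4)\Rightarrow(2)$, since a cyclic $S_i$ with some other generator would not directly give $S\cong S_i/\langle\gamma_i\rangle=1$. You interpret it correctly, but a referee might want that spelled out.
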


\noindent They also show equivalence of these conditions and another condition involving the chip-firing definition of the sandpile group.
A graph satisfying any of these equivalent conditions is said to be \textbf{coeulerian}.

\section{Proof of Theorem~\ref{thm:main}}
In this section we prove Theorem~\ref{thm:main}. The proof follows the proof of Theorem 6.2 of \cite{mw}, replacing the estimates provided for symmetric matrices in \cite{mw} with those provided for non-symmetric matrices in \cite{mw2}. 

For the rest of this paper, let $a$ be a fixed positive integer, $R=\Z/a\Z$. Let $V=(\Z/a\Z)^{n}=R^{n}$, let $\{v_{i}\}_{i=1}^{n}$ be the standard basis of $V$, and let $Z\subseteq V$ be the subspace of vectors in $V$ whose elements sum to zero (so $Z=\Zo^{n}\otimes R$). We wish to calculate the distribution of $\bar{S}=S\otimes R=Z/\bar{L}V$, where $\bar{L}=L\otimes R$.

Random variables can often be determined by their moments. For a random group $G$, the equivalent of the $n$\textsuperscript{th} moment of $G$ is the number of maps to a group $H$. In particular, $|\Hom(G,(\Z/p)^{n})|=X^{n}$ where $X=p^{p\text{-rank}(G)}$. Note that the moments of the distribution of $G$ are indexed by groups instead of integers.

In \cite{mw}, Wood proves that determining $\E(|\Sur(G,H)|)$ determines the distribution of $G$, in a sense we will make precise in Theorem~\ref{thm:dist}. Hence in order to calculate the distribution of $\bar{S}$, it suffices to calculate the moments $\E(|\Sur(\bar{S},G)|)$.

Note that $\E(|\Sur(\bar{S},G)|)$ counts surjections $F:\bar{S}\rightarrow G$. As $\bar{S}=\coker(\bar{L})$, $|\Sur(\bar{S},G)|$ is the number of surjections $F:Z\rightarrow G$ such that $F|_{\Img(\bar{L})}=0$. In other words, 
\begin{align}
\E(|\Sur(\bar{S},G)|)=\sum_{F\in\Sur(Z,G)}\P(F|_{\Img(\bar{L})}=0). 
\end{align}

The main theorem of this section will be concerened with calculating (1):

\begin{thm}
\label{thm:moments}
Let $G$ be a group with exponent dividing $a$, and let $S=S(\Gamma(n))$ be the total sandpile group of an $\epsilon$-balanced random graph on $n$ vertices. Then there exist constants $K,c>0$ depending only on $a,\epsilon$ and $G$ such that
\[\left|\E(\Sur(S,G))-\frac{1}{|G|}\right|\leq Ke^{-cn}.\]
\end{thm}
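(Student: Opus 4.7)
The plan is to compute $\E(|\Sur(S,G)|)$ directly using equation~(1) and the column-wise structure of the Laplacian, following the scheme of \cite{mw} but substituting the non-symmetric random matrix estimates of \cite{mw2} for the symmetric ones. Since $G$ has exponent dividing $a$, every surjection $S\to G$ factors through $\bar S=S\otimes R$, so I analyze $\E(|\Sur(\bar S,G)|)$. For each $F\in\Sur(Z,G)$, I fix an extension $\tilde F:V\to G$ with $\tilde F(v_i)=a_i\in G$; the condition $F|_{\Img(\bar L)}=0$ then becomes $\tilde F(\bar L v_j)=0$ for every $j$.

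Using the column-sum-zero property to rewrite the diagonal entry $\bar L_{jj}$, a short calculation gives
$$\tilde F(\bar L v_j) \;=\; \bar L_{jj} a_j + \sum_{i\ne j}\bar L_{ij}a_i \;=\; \sum_{i\ne j}\deg(i,j)\,(a_j-a_i)\pmod{a}.$$
The variables $\{\deg(i,j):i\ne j\}$ determining column $j$ involve edges pointing into vertex $j$ and are therefore disjoint across different $j$; the columns of $\bar L$ are independent, so
$$\P\bigl(F|_{\Img(\bar L)}=0\bigr) \;=\; \prod_{j=1}^{n}\P\Bigl(\sum_{i\ne j}\deg(i,j)(a_j-a_i)=0\text{ in }G\Bigr).$$

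The heart of the proof is then a single-column Littlewood--Offord-type estimate, imported from \cite{mw2}: for a random $R$-linear combination $\sum_i X_i c_i\in G$ with the $X_i$ independent and $\epsilon$-balanced and $c_i\in G$, the probability of vanishing is $|G|^{-1}+O(\alpha^{n})$ (for some $\alpha<1$ depending on $\epsilon$ and $G$) provided the multiset $\{c_i\}$ is \emph{robust}, i.e., for every proper coset $g+H\subsetneq G$ only a bounded fraction of the $c_i$ lie in $g+H$. Applied column-wise with $c_i=a_j-a_i$, this yields a product of size $|G|^{-n}+O(\alpha^{n})$ for ``typical'' surjections $F$; multiplying by the count $|\Sur(Z,G)|\approx |G|^{n-1}$ gives the main term $1/|G|$.

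The main obstacle, and the reason for tracking the detailed machinery of \cite{mw2}, is controlling those $F$ whose values $\{a_i\}$ collapse into a small number of cosets of some proper $H\subset G$, for which the per-column estimate degrades. My plan is to stratify $\Sur(Z,G)$ by the finest coset partition containing almost all of the $a_i$, and to show in each stratum that the combinatorial cost of confining many $a_i$'s to a proper coset produces an exponential shortfall in the count which dominates the worsened per-column probability. The non-symmetric bounds in \cite{mw2} are expected to transfer directly because our column independence --- made possible by the column-sum-zero constraint, which lets the diagonal entry $\bar L_{jj}$ be eliminated in favor of the off-diagonal edge variables --- exactly matches the hypothesis of entrywise independence in the columns required there.
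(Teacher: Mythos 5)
Your reformulation of the column condition is correct and is a genuine alternative to the route the paper actually takes. Using the column-sum-zero constraint to eliminate the diagonal gives exactly $\tilde F(\bar L v_j)=\sum_{i\ne j}\deg(i,j)(a_j-a_i)$, and the observation that the columns of $\bar L$ are then independent (each diagonal entry being a deterministic function of the off-diagonal entries in its own column) is the right one. The count $|\Sur(Z,G)|\approx|G|^{n-1}$ against $|G|^{-n}$ per column gives the correct main term $1/|G|$. However, the paper handles the column-sum constraint differently: it introduces an auxiliary matrix $X$ with a \emph{uniform} diagonal and shows that $\bar L$ is distributed as $X$ conditioned on $F_0X=0$, so that $\P(F\bar L=0)=a^n\P(\tilde FX=0)$ with $\tilde F=F\oplus F_0\in\Hom(V,G\oplus R)$. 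This is precisely what lets the paper invoke the matrix-level results of \cite{mw2} (Lemma 2.4, Theorem 2.8) and the depth-counting theorems of \cite{mw} (Theorems 5.2, 5.3) as black boxes, since those results are stated for $\P(FM=0)$ with $M$ entrywise $\epsilon$-balanced and $F$ a fixed homomorphism.

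This points to a concrete gap in your plan. The column-$j$ coefficients $c_i^{(j)}=a_j-a_i$ depend on $j$, so your per-column probabilities do not come from a single matrix product $FM$; they are a family of $n$ different linear combinations whose coefficient multisets are shifts of one another. Consequently the estimates of \cite{mw2} do \emph{not} ``transfer directly'' as asserted: the relevant lemmas there are about a fixed $F$ against a random matrix, not about the shifted coefficient families you've set up. The per-column Littlewood--Offord bound you state (vanishing probability $|G|^{-1}+O(\alpha^n)$ under a coset-robustness hypothesis) is morally the technique underlying \cite{mw2}, and it is true, but you would need to state and prove it yourself rather than cite it; and the stratification of $\Sur(Z,G)$ by coset partitions, together with the claim that the combinatorial cost of clustering the $a_i$ beats the degraded per-column bound, is exactly what the depth machinery of \cite{mw} formalizes. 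In other words, your plan would essentially require re-deriving the code/depth apparatus from scratch in a shifted-coefficient setting, whereas the paper's $\tilde F=F\oplus F_0$ translation is exactly the device that makes the existing apparatus apply verbatim. The plan is sound, but as written it leans on a citation that does not cover your reformulated estimate, and the final stratification step is not yet an argument.
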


The rest of this section will be concerned with proving Theorem~\ref{thm:moments} by estimating the probabilities $\P(F|_{\Img(\bar{L})}=0)$. We will show that the majority of functions $F:V\rightarrow G$ are codes, as defined in \cite{mw} (for details, see Section~\ref{subsec:codes}, below). For these codes, the probability $\P(F|_{\Img(\bar{L})}=0)$ is close to what we expect for the probability that $\Img(\bar{L})$ lies in a generic fixed hyperplane. We will then bound the contribution of non-codes, which will allow us to estimate the sum.

\subsection{Codes and Depth}
\label{subsec:codes}
We will now recall some definitions from \cite{mw}.

Let $\sigma\subseteq [n]$. The we let $V_{\backslash\sigma}$ be the subspace of $V$ generated by $v_{i}$ for $i\notin\sigma$. 

\begin{defn}
We say that $F\in\Sur(V,G)$ is a \textbf{code} of distance $w$, if for every $\sigma\subset [n]$ with $|\sigma|<w$, we have $FV_{\backslash\sigma}=G$. In other words, $F$ is not only surjective, but would still surjective after removing any $w$ basis vectors from $V$.
\end{defn}

\noindent If $a$ is prime (so that $R$ is a field), this is equivalent to whether the transpose map $F^{*}: G^{*} \rightarrow V^{*}$ (given by $(F^{*}(\varphi))(v)=\varphi(F(v))$) is injective and has image $\Img(F^{*})\subseteq V^{*}$ a linear code of distance $w$ in the usual sense.

We would like to simply split $F$ into codes and non-codes. However, this is not delicate enough for our purposes. We will also need the notion of depth, which can be understood as a way to measure the size of the largest subgroup of $G$ for which $F$ is a code.

Let $\delta>0$ small, to be fixed later. For any integer $D$ with prime decomposition $D=\prod_{i} p_{i}^{e_{i}}$, define $\ell(D)=\sum_{i}e_{i}$.

\begin{defn}  
The \textbf{depth} of an $F\in\Hom(V,G)$ is the maximal positive $D$ for which there is a $\sigma\subset [n]$ with $|\sigma|<\ell(D)\delta n$ such that $D=[G:F(V_{\backslash\sigma})]$, or $1$ if there is no such $D$.
\end{defn}

\begin{rem}
\label{rem:depth}
In particular, if the depth of $F$ is $1$, then for any $\sigma\subset[n]$ with $|\sigma|\leq \delta n$, we have that $F(V_{\backslash\sigma})=G$ (as otherwise $\ell([G:F(V_{\backslash\sigma})])\geq 1$), and so we see that $F$ is a code of distance $\delta n$.
\end{rem}

The proof of Theorem~\ref{thm:moments} will rely first on the fact that codes are close to being uniformly distributed, in the sense that when $M$ is an $\epsilon$-balanced random matrix and $F$ is a code, $FM$ is close to being uniformly distributed in $G$. In particular for each basis vector $v_{i}$, $FM(v_{i})=0$ with probability roughly $\frac{1}{|G|}$. As there are $n$ basis vectors $v_{i}$, we get that $\P(FM=0)\approx\frac{1}{|G|^{n}}$.

We also use the fact that there are not too many non-codes, and bound the contribution of non-codes by showing that as they have greater depth they become less uniformly distributed but also less frequent, which balances out. To do this, we will split the sum in (1) and use the following results:

\begin{thm}[Theorem 5.2 of \cite{mw}]
\label{thm:countDepth}
(Counting $F$ of a given depth): There is a constant $K$ depending only on $G$ such that if $D>1$, then the number of $F\in\Hom(V,G)$ of depth $D$ is at most
\[K{n \choose{\left \lceil \ell(D)\delta n \right \rceil-1}}|G|^{n}|D|^{-n+\ell(D)\delta n}. \]
\end{thm}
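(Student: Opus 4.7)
The plan is a direct enumeration via a union bound over the witnessing data in the definition of depth. If $F \in \Hom(V,G)$ has depth $D > 1$, then by definition there exist some $\sigma \subset [n]$ with $|\sigma| \leq \lceil \ell(D)\delta n \rceil - 1$ and some subgroup $H = F(V_{\backslash\sigma}) \leq G$ of index exactly $D$. Dropping the maximality condition in the definition of depth (we only ask that some $\sigma$ witness $D$, not that $D$ be maximal) only enlarges the set we count, so it suffices to bound the number of triples $(F, \sigma, H)$ with $|\sigma| \leq \lceil \ell(D)\delta n \rceil - 1$, $[G:H] = D$, and $F(V_{\backslash\sigma}) \subseteq H$.

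The inner count is immediate: for fixed $\sigma$ of size $k$ and fixed $H \leq G$ of index $D$, a homomorphism $F: V \to G$ with $F(V_{\backslash\sigma}) \subseteq H$ is specified by freely choosing $F(v_i) \in H$ for each $i \notin \sigma$ and $F(v_i) \in G$ for each $i \in \sigma$, giving $|H|^{n-k}|G|^{k} = |G|^{n} D^{-(n-k)}$ possibilities. Summing over subgroups $H$ of index $D$ contributes at most a factor equal to the total number of subgroups of $G$, which depends only on $G$ since $D$ divides $|G|$. Summing over $\sigma$ of each allowable size then produces, up to a constant depending only on $G$,
\[
|G|^{n} D^{-n} \sum_{k=0}^{\lceil \ell(D)\delta n \rceil - 1} \binom{n}{k} D^{k}.
\]

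The ratio of consecutive terms in this sum is $D(n-k)/(k+1)$, which for $\delta$ sufficiently small exceeds $2$ throughout the summation range, so the sum is dominated by its top term up to a geometric constant. This yields
\[
|G|^{n} \binom{n}{\lceil \ell(D)\delta n \rceil - 1} D^{-n + \lceil \ell(D)\delta n \rceil - 1} \leq |G|^{n} \binom{n}{\lceil \ell(D)\delta n \rceil - 1} D^{-n + \ell(D)\delta n},
\]
and all multiplicative constants (subgroup count, geometric ratio, etc.) get absorbed into the stated $K$. The only subtlety is the uniform choice of $\delta$: since $D$ ranges over divisors of $|G|$ and $\ell(D) \leq \log_{2}|G|$, a single $\delta$ depending only on $|G|$ makes the geometric collapse work simultaneously for every relevant $D$, which is the main point requiring care.
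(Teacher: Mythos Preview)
The paper does not prove this statement; it is quoted as Theorem~5.2 of \cite{mw} and used as a black box, so there is no in-paper argument to compare against. Your union-bound proof is correct and is the natural one: relax maximality, enumerate pairs $(\sigma,H)$ with $|\sigma|\le \lceil\ell(D)\delta n\rceil-1$ and $[G:H]=D$, count the $|H|^{n-|\sigma|}|G|^{|\sigma|}$ homomorphisms sending $V_{\backslash\sigma}$ into $H$, and sum. One small point to make explicit: the geometric collapse of $\sum_{k}\binom{n}{k}D^{k}$ to its top term requires the ratio $D(n-k)/(k+1)$ to be bounded below by a constant greater than $1$ throughout the range $k<\ell(D)\delta n$, which in turn needs $\delta$ small in terms of $\ell(|G|)$; this is consistent with the paper's setup, where $\delta$ is declared ``small, to be fixed later'' and is ultimately chosen depending on $G$, $a$, and $\epsilon$.
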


In particular, we will use the following variant:

\begin{thm}[Theorem 5.3 of \cite{mw}]
\label{thm:countDepth2}
Let $pr_{2}:G\oplus R\rightarrow R$ be projection onto the second factor. There is a constant $K$ depending on $G$ such that if $D>1$, then the number of $F\in\Hom(V,G\oplus R)$ of depth $D$ such that $pr_{2}(Fv_{i})=1$ for all $i\in [n]$ is at most
\[K{n \choose{\left \lceil \ell(D)\delta n \right \rceil-1}}|G|^{n}|D|^{-n+\ell(D)\delta n} .\]
\end{thm}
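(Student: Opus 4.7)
The plan is to imitate the proof of Theorem~\ref{thm:countDepth}, modifying the counting argument to account for the enlarged target $G\oplus R$ and the constraint $pr_{2}(Fv_{i})=1$ for all $i$.

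Suppose $F\in\Hom(V,G\oplus R)$ has depth $D>1$ and satisfies the $pr_{2}$ condition. By the definition of depth there exists a witnessing $\sigma\subset[n]$ with $k:=|\sigma|\leq\lceil\ell(D)\delta n\rceil-1$ and $[G\oplus R:F(V_{\backslash\sigma})]=D$. I will bound the number of such $F$ by first choosing $\sigma$, then choosing any subgroup $H\leq G\oplus R$ of index $D$ containing $F(V_{\backslash\sigma})$, and finally choosing $g_{i}\in G$ (so that $F(v_{i})=(g_{i},1)$) for each $i$; a single $F$ may be counted several times, but this still yields a valid upper bound.

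The number of choices of $\sigma$ is at most $\binom{n}{\lceil\ell(D)\delta n\rceil-1}$. To parametrize the admissible $H$, note that the condition $(g_{i},1)\in H$ for each $i\notin\sigma$ forces $H$ to surject onto $R$. Any such $H\leq G\oplus R$ is determined by the pair $(K,\phi)$ where $K=H\cap(G\oplus 0)\leq G$ and $\phi:R\to G/K$ is the homomorphism characterized by $\phi(r)=g+K$ for any $(g,r)\in H$; conversely any such pair yields an $H$ via $H=\{(g,r)\in G\oplus R\mid g+K=\phi(r)\}$. A direct computation gives $[G\oplus R:H]=[G:K]$, so the constraint $[G\oplus R:H]=D$ becomes $[G:K]=D$. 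The number of such pairs $(K,\phi)$ is therefore bounded by (number of index-$D$ subgroups of $G$)$\times|G|$, a constant $C_{G}$ depending only on $G$.

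Given $\sigma$ and $(K,\phi)$, for each $i\notin\sigma$ the condition $(g_{i},1)\in H$ forces $g_{i}$ into the single coset $\phi(1)\in G/K$, yielding $|K|=|G|/D$ choices, while for each $i\in\sigma$ the element $g_{i}\in G$ is unrestricted, yielding $|G|$ choices. Multiplying the estimates together gives
\[C_{G}\binom{n}{\lceil\ell(D)\delta n\rceil-1}\left(\frac{|G|}{D}\right)^{n-k}|G|^{k}=C_{G}\binom{n}{\lceil\ell(D)\delta n\rceil-1}|G|^{n}D^{-n+k},\]
and since $k\leq\lceil\ell(D)\delta n\rceil-1\leq\ell(D)\delta n$ this is at most $C_{G}\binom{n}{\lceil\ell(D)\delta n\rceil-1}|G|^{n}D^{-n+\ell(D)\delta n}$, which is the stated estimate. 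The main obstacle is the Goursat-style classification of index-$D$ subgroups of $G\oplus R$ that surject onto $R$ by pairs $(K,\phi)$; once this parametrization and the index formula $[G\oplus R:H]=[G:K]$ are in hand, the counting reduces directly to the argument behind Theorem~\ref{thm:countDepth}.
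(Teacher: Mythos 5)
Since the paper simply quotes this result as Theorem 5.3 of \cite{mw} without reproducing an argument, there is no in-paper proof to compare against; your reconstruction is correct and is the natural modification of the count behind Theorem~\ref{thm:countDepth}. The key observation --- that a subgroup $H\le G\oplus R$ containing some element of the form $(g,1)$ must surject onto $R$, is therefore parametrized \`a la Goursat by $K=H\cap(G\oplus 0)$ and a homomorphism $\phi:R\to G/K$, and satisfies $[G\oplus R:H]=[G:K]$ with the fibre of $pr_2$ over $1$ having exactly $|K|=|G|/D$ elements --- is precisely what is needed to transfer the argument while respecting the constraint $pr_2(Fv_i)=1$, and it is how \cite{mw} handles the modification. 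One small point to tighten: as written, ``the number of choices of $\sigma$ is at most $\binom{n}{\lceil\ell(D)\delta n\rceil-1}$'' is not literally true when $|\sigma|=k$ is allowed to range (the count is $\sum_{k\le\lceil\ell(D)\delta n\rceil-1}\binom{n}{k}$), and you later treat $k$ as a single quantity. The cleanest fix is to note that any witnessing $\sigma$ may be enlarged to one of size exactly $\lceil\ell(D)\delta n\rceil-1$ --- enlarging $\sigma$ only relaxes the membership constraints $(g_i,1)\in H$ in your parametrization and leaves the chosen index-$D$ subgroup $H$ unchanged --- so one may assume $k=\lceil\ell(D)\delta n\rceil-1$ throughout, after which the binomial and the exponent of $D$ are both exact.
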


\noindent The condition $pr_{2}(Fv_{i})=1$ will allow us to encode the restriction that $\Img(\bar{L})\subseteq Z$.

The above results bound the number of non-codes of depth $D$. We now wish to bound the total contribution of $\P(F|_{\Img(\bar{L})}=0)$ when $F$ isn't a code:

\begin{thm}[Theorem 2.8 of \cite{mw2}]
\label{thm:probDepth}
Let $\epsilon>0$, $\delta>0$, and let $M\in\Hom(V,V)$ be an $\epsilon$-balanced random matrix. Then there is a constant $K$ depending only on $G,\epsilon,\delta$, and $a$ such that if $F\in\Hom(V,G)$ has depth $D>1$ and $[G:F(V)]<D$, then
\[ \P(FM=0)\leq Ke^{-\epsilon n}D^{n}|G|^{-n}. \]
\end{thm}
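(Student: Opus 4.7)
The plan is to exploit the independence of the columns of $M$. Writing $Mv_i$ for the $i$-th column,
\[\P(FM=0) \;=\; \prod_{i=1}^n \P\bigl(F(Mv_i)=0\bigr),\]
so it suffices to prove a per-column bound of the form $\P(F(X)=0)\le (1-\epsilon)\cdot D/|G|$ (up to exponentially small error) for $X\in V$ a random vector with independent $\epsilon$-balanced entries. Taking the $n$-fold product and using $(1-\epsilon)^n\le e^{-\epsilon n}$ then yields the desired $Ke^{-\epsilon n}D^n/|G|^n$, with $K$ absorbing polynomial losses.

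To bound a single column, fix a witness $\sigma\subset [n]$ with $|\sigma|<\ell(D)\delta n$ realizing the depth of $F$, and set $H:=F(V_{\backslash\sigma})$, so $[G:H]=D$. Decompose $X=X^\sigma+X^{\sigma^c}$ and split $F(X)=Y+Z$ with $Y:=F(X^{\sigma^c})\in H$ and $Z:=F(X^\sigma)\in G$. Since $Y\in H$, the condition $F(X)=0$ forces the image $\bar Z$ of $Z$ in $G/H$ to vanish; conditional on such an $X^\sigma$, we additionally need $Y=-Z$ in $H$. Writing this out,
\[\P(F(X)=0) \;\le\; \sup_{y\in H}\P(Y=y)\cdot\P(\bar Z=0),\]
so the bound splits into two estimates: an anti-concentration statement that $Y$ is close to uniform on $H$ (contributing the factor $1/|H|=D/|G|$) and a savings estimate $\P(\bar Z=0)\le 1-\epsilon$ arising from the hypothesis $[G:F(V)]<D$.

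For the savings step, $[G:F(V)]<[G:H]$ implies $F(V)/H$ is a nontrivial subgroup of $G/H$. Since any $v_j$ with $j\notin\sigma$ satisfies $F(v_j)\in H$, the nontriviality must be supplied from within $\sigma$: there is some $j_0\in\sigma$ with $\bar F(v_{j_0})\neq 0$. Picking a prime $p$ dividing $\ord(\bar F(v_{j_0}))$, the contribution $X_{j_0}\bar F(v_{j_0})$ depends only on $X_{j_0}\bmod p$, and the $\epsilon$-balanced hypothesis gives $\P(X_{j_0}\equiv r\pmod p)\le 1-\epsilon$ for any fixed residue $r$. Conditioning on $X_j$ for $j\in\sigma\setminus\{j_0\}$ first and integrating over $X_{j_0}$ last then yields $\P(\bar Z=0)\le 1-\epsilon$. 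For the anti-concentration step, apply Fourier analysis on $H$: for each nontrivial character $\chi$ of $H$,
\[\E[\chi(Y)] \;=\; \prod_{j\notin\sigma}\E\bigl[\chi(X_j F(v_j))\bigr],\]
and the $\epsilon$-balanced hypothesis combined with the surjectivity of $F|_{V_{\backslash\sigma}}$ onto $H$ forces a linear-in-$n$ number of these factors to be bounded away from $1$ in modulus, producing exponential decay of the character sum and hence $\sup_{y}\bigl|\P(Y=y)-1/|H|\bigr|\le e^{-cn}$.

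The main obstacle is the anti-concentration step: one needs the error $\sup_y|\P(Y=y)-1/|H||$ to be exponentially small in $n$, uniformly in $y$ and in the choice of $F$, so that the product over columns still yields a constant multiplier. The maximality built into the depth definition is essential here --- it prevents $F$ from being pathological on $V_{\backslash\sigma}$ in a way that would cause some character $\chi$ of $H$ to see $\chi(F(v_j))=1$ on all but a sublinear set of coordinates, which would destroy the Fourier decay. Once that uniform anti-concentration estimate is established, combining it with the elementary $(1-\epsilon)$ savings from the $[G:F(V)]<D$ assumption and taking the product over the $n$ independent columns of $M$ delivers the stated bound.
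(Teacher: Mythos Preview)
The paper does not give its own proof of this statement: it is quoted verbatim as Theorem~2.8 of \cite{mw2} and used as a black box in the proof of Theorem~\ref{thm:moments}. So there is no in-paper argument to compare against; the relevant comparison is to Wood's proof in \cite{mw2}.

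Your outline is correct and is essentially the argument Wood gives. The column-by-column factorisation, the use of a depth witness $\sigma$ with $H=F(V_{\backslash\sigma})$ to split $F(X)=Y+Z$, the bound $\P(F(X)=0)\le\sup_{y}\P(Y=y)\cdot\P(\bar Z=0)$, the $(1-\epsilon)$ savings from a single index $j_0\in\sigma$ with $\bar F(v_{j_0})\ne 0$ (which exists precisely because $[G:F(V)]<D$), and the anti-concentration of $Y$ on $H$ via Fourier --- all of this matches the structure of the proof in \cite{mw2}. You have also correctly identified the crucial point: maximality of $D$ forces $F|_{V_{\backslash\sigma}}:V_{\backslash\sigma}\to H$ to itself be a code of distance $\delta n$ (else enlarging $\sigma$ by fewer than $\delta n$ indices would produce a strictly larger index $D''>D$ with $|\sigma\cup\tau|<\ell(D'')\delta n$, contradicting maximality), and this is exactly what drives the exponential Fourier decay uniformly over nontrivial characters of $H$.

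Two small points worth tightening. First, your sentence ``the contribution $X_{j_0}\bar F(v_{j_0})$ depends only on $X_{j_0}\bmod p$'' is not literally true --- it depends on $X_{j_0}$ modulo $\ord(\bar F(v_{j_0}))$ --- but what you need (and what holds) is that the event $X_{j_0}\bar F(v_{j_0})=c$ forces $X_{j_0}$ into a single residue class mod $p$, whence $\P\le 1-\epsilon$. Second, the phrase ``$K$ absorbing polynomial losses'' undersells the situation: the multiplicative error from anti-concentration is $(1+|H|e^{-cn})^n\le\exp(n|H|e^{-cn})$, which is bounded by an absolute constant for all $n$, so $K$ really is a constant depending only on $G,\epsilon,\delta,a$ as claimed.
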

\noindent Note that in particular, $[G:F(V)]<D$ is always true if $F(V)=G$.

Finally, codes are close to being uniformly distributed, in the following sense:
\begin{lemma}[Lemma 2.4 of \cite{mw2}]
\label{lemma:codes}
Let $\epsilon>0,\delta>0$. Let $M\in\Hom(V,V)$ be an $\epsilon$-balanced random matrix, and let $F\in \Hom(V,G)$ be a code of distance $\delta n$. Then there are $K,c>0$ depending only on $G,\epsilon,\delta$, and $a$ such that for all $n$ we have
\[ |\P(FM=0)-|G|^{-n}|\leq \frac{Ke^{-cn}}{|G|^{n}}. \]
\end{lemma}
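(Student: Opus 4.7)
The plan is to exploit independence in two layers --- first across the columns of $M$, then across entries within each column --- combined with discrete Fourier analysis on $G$. Let $c_1,\dots,c_n\in V$ denote the columns of $M$. Since the columns are independent,
\[\P(FM=0)=\prod_{i=1}^{n}\P(F(c_i)=0),\]
and it suffices to prove a single-column estimate
\[\bigl|\P(F(c)=0)-|G|^{-1}\bigr|\leq (1-\eta)^{\delta n}\]
for some $\eta=\eta(a,\epsilon,G)>0$, when $c\in V$ has independent $\epsilon$-balanced coordinates; a routine telescoping of the product over $i$ then converts this into the stated bound.

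For the one-column estimate, let $c=(y_1,\dots,y_n)$. Using $F(c)=\sum_j y_jF(v_j)$, independence of the $y_j$, and Fourier orthogonality on $G$,
\[\P(F(c)=0)=\frac{1}{|G|}\sum_{\chi\in\widehat G}\E[\chi(F(c))]=\frac{1}{|G|}+\frac{1}{|G|}\sum_{\chi\neq 1}\prod_{j=1}^{n}\E[\chi(y_jF(v_j))].\]
Fix a non-trivial character $\chi$ with kernel $H\subsetneq G$. For each $j$, the map $\psi_{j,\chi}\colon t\mapsto\chi(tF(v_j))$ is a character of $R=\Z/a\Z$, and it is trivial precisely when $F(v_j)\in H$. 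Here the code hypothesis enters: if $\sigma:=\{j:F(v_j)\notin H\}$ had $|\sigma|<\delta n$, then $F(V_{\backslash\sigma})\subseteq H\subsetneq G$, contradicting that $F$ is a code of distance $\delta n$. So at least $\delta n$ of the characters $\psi_{j,\chi}$ are non-trivial.

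The remaining ingredient is an analytic lemma: for any non-trivial character $\psi$ of $R$ and any $\epsilon$-balanced $y\in R$, $|\E[\psi(y)]|\leq 1-\eta$ for some $\eta=\eta(a,\epsilon)>0$, proved by picking a prime $p\mid a$ on whose $p$-Sylow component $\psi$ acts non-trivially and using mod-$p$ non-concentration of $y$ to push $\E[\psi(y)]$ off the unit circle. Granting this, each non-trivial-$\chi$ product has absolute value at most $(1-\eta)^{\delta n}$; summing over the $|G|-1$ non-trivial $\chi$ and then multiplying the resulting one-column factors yields
\[\P(FM=0)=|G|^{-n}\bigl(1+O(n|G|(1-\eta)^{\delta n})\bigr),\]
with $O$-constant depending only on $a,\epsilon,\delta,G$, which gives the stated $Ke^{-cn}/|G|^n$ bound after absorbing the polynomial factor into a slightly smaller exponent. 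The main obstacle is the analytic lemma on characters of $R$: converting the combinatorial $\epsilon$-balanced hypothesis into a quantitative spectral gap uniform in $\chi$. Everything else is the now-standard pattern of column-wise independence, Fourier inversion on $G$, and the code property --- the latter's only role being to guarantee that a constant fraction $\delta n$ of the Fourier factors are genuinely non-trivial.
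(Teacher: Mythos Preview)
This lemma is not proved in the present paper; it is quoted as Lemma~2.4 of \cite{mw2}. Your argument---factor $\P(FM=0)$ over independent columns, apply Fourier inversion on $G$ to each column, use the code hypothesis to guarantee at least $\delta n$ non-trivial factors for every non-trivial $\chi$, bound each such factor by $1-\eta$, and then telescope the product---is correct and is essentially the proof given in \cite{mw2}.

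The one step that merits an extra line is your analytic lemma. The sketch via the $p$-Sylow component points at the right idea, but note that $\psi$ need not factor through $\Z/p\Z$, so ``mod-$p$ non-concentration'' does not apply directly to $\psi$. The clean way to extract a \emph{uniform} $\eta=\eta(a,\epsilon)>0$ is: if $|\E[\psi(y)]|=1$ then the support of $y$ lies in a single coset of $\ker\psi$; since $\ker\psi$ is a proper subgroup of the cyclic group $R$ it is contained in $pR$ for some prime $p\mid a$, forcing $y\bmod p$ to be constant and contradicting $\epsilon$-balancedness. Hence $|\E[\psi(y)]|<1$ for every $\epsilon$-balanced law and every non-trivial $\psi$, and compactness of the set of $\epsilon$-balanced distributions on the finite set $R$ upgrades this to a uniform gap $1-\eta$. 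With that in hand, the rest of your outline goes through verbatim.
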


In particular, applying this to $G\oplus R$ gives us the following corollary:
\begin{cor}
\label{cor:codes}
Let $F\in\Sur(V,G\oplus R)$ be a code of distance $\delta n$, and $M$ as before. Then 
\[ \left|\P(FM=0)-(a|G|)^{-n}\right|\leq \frac{Ke^{-cn}}{a^{n}|G|^{n}}. \]
\end{cor}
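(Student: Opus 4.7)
The plan is to apply Lemma~\ref{lemma:codes} with the target group $G$ replaced by $G' := G\oplus R$, and then simply substitute $|G'|=a|G|$ into the resulting bound.

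First I would check that the hypotheses of Lemma~\ref{lemma:codes} are met for $G'$. The lemma requires that the exponent of the target group divides $a$: since $G$ has exponent dividing $a$ by standing assumption and $R = \Z/a\Z$ has exponent exactly $a$, the exponent of $G\oplus R$ is the least common multiple, which still divides $a$. The hypothesis that $M$ is an $\epsilon$-balanced random matrix in $\Hom(V,V)$ is unchanged. Finally, the notion of code of distance $\delta n$ depends only on $F$ as a homomorphism from $V$ to an abelian group, not on what that group is, so $F\in\Sur(V,G\oplus R)$ being a code of distance $\delta n$ is exactly the input the lemma needs (in particular, surjectivity is stronger than required).

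With all hypotheses verified, Lemma~\ref{lemma:codes} applied to $G' = G\oplus R$ yields constants $K',c>0$ depending only on $G\oplus R$, $\epsilon$, $\delta$, and $a$ (and hence on $G$, $\epsilon$, $\delta$, and $a$) such that
\[ \bigl|\P(FM=0)-|G\oplus R|^{-n}\bigr|\leq \frac{K'e^{-cn}}{|G\oplus R|^{n}}. \]
Substituting $|G\oplus R| = |G|\cdot|R| = a|G|$ gives the claimed bound, with $K := K'$.

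There is no real obstacle here: the corollary is essentially a notational repackaging of the lemma. The only thing worth noting is that the constants in the corollary are allowed to depend on $G$ and $a$ (rather than on $G\oplus R$), which is fine since $a$ is fixed throughout the section and $G\oplus R$ is determined by $G$ and $a$.
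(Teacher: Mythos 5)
Your proof is correct and is exactly the paper's argument: the corollary is introduced with the line ``In particular, applying this to $G\oplus R$ gives us the following corollary,'' so the intended derivation is precisely the substitution $G\mapsto G\oplus R$ in Lemma~\ref{lemma:codes} followed by $|G\oplus R|=a|G|$. The only tiny slip is your remark that ``surjectivity is stronger than required''---in the paper's definition a code is already an element of $\Sur(V,G)$, so surjectivity is part of the hypothesis of the lemma, not an extra; this does not affect the validity of the argument.
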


\subsection{Proof of Theorem~\ref{thm:moments}}

We are now ready to prove Theorem~\ref{thm:moments}. We follow the proof of Theorem 6.2 of \cite{mw}, replacing the results of \cite{mw} for symmetric matrices with those of \cite{mw2} for non-symmetric matrices when neccessary.

\begin{proof}
Let $X$ be an $n\times n$ random matrix over $R$ with $X_{ij}$ distributed as $\bar{L}_{ij}$ for $i\neq j$, and $X_{ii}$ distributed uniformly in $R$, with all the entries $X_{ij}$ independent. Let $F_{0}\in\Hom(V,R)$ be the map that sends each basis element $v_{i}$ to $1$. If we condition on $F_{0}X=0$, then we find that $X$ and $\bar{L}$ have the same distribution. Also, given $X$ and conditioning on the off diagonal entries, we see that the probability that $F_{0}X=0$ is $a^{-n}$ for any choice of off diagonal entries. So any choice of off diagonal entries is equally likely in $\bar{L}$ as in $X$ conditioned on $F_{0}X=0$.

For any $F\in\Hom(V,G)$, we have
\[\P(F\bar{L}=0)=\P(FX=0\:|\:F_{0}X=0)=\P(FX=0 \:\and\: F_{0}X=0)a^{n} .\]   

\noindent Let $\tilde{F}\in\Hom(V,G\oplus R)$ be the sum of $F$ and $F_{0}$.
Recall that $Z\subset V$ denotes the vectors whose coordinates sum to zero, i.e. $Z=\ker(F_{0})$. Let $\Sur^{*}(V,G)$ denote the maps from $V$ to $G$ that are a surjection when restricted to $Z$. We wish to estimate

\begin{align*}
\E(\#\Sur(\bar{S},G))&=\E(\#\Sur(Z/\Img(\bar{L}),G)) \\
&= \sum_{F\in\Sur(Z,G)}\P(F\bar{L}=0) \\
&= \frac{1}{|G|}\sum_{F\in\Sur^{*}(V,G)}\P(F\bar{L}=0) \\
&= |G|^{-1}a^{n}\sum_{F\in\Sur^{*}(V,G)}\P(\tilde{F}X=0)
\end{align*}

\noindent Note that if $F:V\rightarrow G$ is a surjection when restricted to $Z$, then $\tilde{F}$ is a surjection from $V$ to $G\oplus R$. 

We will first break this sum apart:
\begin{align*}
&|G|^{-1}a^{n}\sum_{F\in\Sur^{*}(V,G)}\P(\tilde{F}X=0) = \\
 &\frac{a^{n}}{|G|}\sum_{\underset{\tilde{F}\text{ not code of distance }\delta n}{F\in\Sur^{*}(V,G)}}\P(\tilde{F}X=0) + 
 \frac{a^{n}}{|G|}\sum_{\underset{\tilde{F}\text{ code of distance }\delta n}{F\in\Sur^{*}(V,G)}} \left(\P(\tilde{F}X=0)-\frac{1}{(a|G|)^{n}}\right) + \\
 &\frac{a^{n}}{|G|}\sum_{\underset{\tilde{F}\text{ code of distance }\delta n}{F\in\Sur^{*}(V,G)}} \left(\frac{1}{(a|G|)^{n}}\right)
\end{align*}

In order to estimate the sum, we bound the first two parts by $Ke^{-cn}$ for constants $c,K$, then show that the third part is approximately $\frac{1}{|G|}$. Throughout the proof, we will take $c$ and $\delta$ to be sufficiently small constants. We will allow $K$ to change from line to line, so long as it is a constant depending only on $G,\epsilon,a$, and $\delta$.

We begin by bounding the first part: 

\begin{align*}
&\frac{a^{n}}{|G|}\sum_{\underset{\tilde{F}\text{ not code of distance }\delta n}{F\in\Sur^{*}(V,G)}}\P(\tilde{F}X=0)& \\
&\leq\frac{a^{n}}{|G|}\sum_{\underset{D|\#G}{D>1}}\sum_{\underset{\tilde{F}\text{ of depth }D}{F\in\Sur^{*}(V,G)}}\P(\tilde{F}X=0) && \text{(By Remark~\ref{rem:depth})}\\
&\leq\frac{a^{n}}{|G|}\sum_{\underset{D|\#G}{D>1}}\#\{\tilde{F}\in\Hom(V,G\oplus R)\text{ of depth }D\:|\:pr_{2}(v_{i})=1\text{ for all }i\}Ke^{-\epsilon n}D^{n}(a|G|)^{-n} && \text{(By Theorem~\ref{thm:probDepth})} \\
&\leq\frac{a^{n}}{|G|}\sum_{\underset{D|\#G}{D>1}} K{n \choose{\left \lceil \ell(D)\delta n \right \rceil-1}}|G|^{n}D^{-n+\ell(D)\delta n}e^{-\epsilon n}D^{n}(a|G|)^{-n} && \text{(By Theorem~\ref{thm:countDepth2})} \\
&\leq K{n \choose{\left \lceil \ell(D)\delta n \right \rceil-1}}|G|^{\ell(|G|)\delta n}e^{-\epsilon n}\leq Ke^{-cn}.
\end{align*}
The last inequality holds provided that $c<\epsilon$ and $\delta$ is sufficiently small.

We now wish to bound 

\[\left|\frac{a^{n}}{|G|}\sum_{\underset{\tilde{F}\text{ code of distance }\delta n}{F\in\Sur^{*}(V,G)}} \left(\P(\tilde{F}X=0)-\frac{1}{(a|G|)^{n}}\right)\right|. \]

Recalling that by Corollary~\ref{cor:codes}, when $\tilde{F}$ is a code of distance $\delta n$, we have 

\[\left|\P(\tilde{F}X=0)-\frac{1}{(a|G|)^{n}}\right|\leq\frac{Ke^{-cn}}{a^{n}|G|^{n}}. \]

Using this, we get

\begin{align*}
\left|\frac{a^{n}}{|G|}\sum_{\underset{\tilde{F}\text{ code of distance }\delta n}{F\in\Sur^{*}(V,G)}} \left(\P(\tilde{F}X=0)-\frac{1}{(a|G|)^{n}}\right)\right|&\leq 
\frac{a^{n}}{|G|}\sum_{\underset{\tilde{F}\text{ code of distance }\delta n}{F\in\Sur^{*}(V,G)}} \left|\P(\tilde{F}X=0)-\frac{1}{(a|G|)^{n}}\right| \\
&\leq \frac{a^{n}}{|G|}\sum_{\underset{\tilde{F}\text{ code of distance }\delta n}{F\in\Sur^{*}(V,G)}} \frac{Ke^{-cn}}{a^{n}|G|^{n}} \\
&\leq \frac{a^{n}}{|G|}\left|\Sur^{*}(V,G) \right|\frac{Ke^{-cn}}{a^{n}|G|^{n}} \\
&\leq \frac{a^{n}}{|G|}\left|\Hom(V,G) \right|\frac{Ke^{-cn}}{a^{n}|G|^{n}} \\
&\leq \frac{a^{n}}{|G|}\left|G \right|^{n}\frac{Ke^{-cn}}{a^{n}|G|^{n}} \\
&\leq Ke^{-cn}.
\end{align*}

Finally, we wish to estimate 
\[\frac{a^{n}}{|G|}\sum_{\underset{\tilde{F}\text{ code of distance }\delta n}{F\in\Sur^{*}(V,G)}} \left(\frac{1}{(a|G|)^{n}}\right)=\frac{\left|\{F\in\Sur^{*}(V,G)\:|\:\tilde{F}\text{ code of distance }\delta n\} \right|} {|G|^{n+1}}.\]

To do this, note that
\begin{align*}
&\frac{\left|\{F\in\Sur^{*}(V,G)\:|\:\tilde{F}\text{ code of distance }\delta n\} \right|}{|G|^{n+1}}= \\
& \frac{|\Hom(V,G)|}{|G|^{n+1}}-
\frac{|\Hom(V,G)\backslash\Sur^{*}(V,G)|}{|G|^{n+1}}- 
\frac{\left|\{F\in\Sur^{*}(V,G)\:|\:\tilde{F}\text{ not code of distance }\delta n\} \right|}{|G|^{n+1}}.
\end{align*}

Recall that $|\Hom(V,G)|=|G|^{n}$, so $\frac{|\Hom(V,G)|}{|G|^{n+1}}=\frac{1}{|G|}$. We will show that the other parts are bounded by $Ke^{-cn}$, which will complete the proof.

To bound the second part, we see that
\begin{align*}
\frac{|\Hom(V,G)\backslash\Sur^{*}(V,G)|}{|G|^{n+1}}&=\sum_{F\in\Hom(V,G)\backslash \Sur^{*}(V,G)}|G|^{-n-1} \\
&\leq \sum_{H\lneqq G}|G|\sum_{F\in\Hom(Z,H)}|G|^{-n-1} \\
&\leq \sum_{H\lneqq G}\frac{|H|^{n-1}}{|G|^{n}} \\
&\leq \sum_{H\lneqq G}2^{-n}\leq K2^{-n} \leq Ke^{-cn}.
\end{align*}

And for the third part, we have

\begin{align*}
\frac{\left|\{F\in\Sur^{*}(V,G)\:|\:\tilde{F}\text{ not code of distance }\delta n\} \right|}{|G|^{n+1}}&=\sum_{\underset{\tilde{F}\text{ not code of distance }\delta n}{F\in\Sur^{*}(V,G)}}|G|^{-n-1}\\
&\leq\sum_{\underset{D|\#G}{D>1}}\sum_{\underset{\tilde{F}\text{ of depth }D}{F\in\Sur^{*}(V,G)}}|G|^{-n-1} \\
&\leq \sum_{\underset{D|\#G}{D>1}} K{n \choose{\left \lceil \ell(D)\delta n \right \rceil-1}}|G|^{n}D^{-n+\ell(D)\delta n}|G|^{-n-1} \\
&\leq\sum_{\underset{D|\#G}{D>1}} K{n \choose{\left \lceil \ell(|D|)\delta n \right \rceil-1}}D^{-n+\ell(D)\delta n} \\
&\leq K{n \choose{\left \lceil \ell(|G|)\delta n \right \rceil-1}}2^{-n+\ell(|G|)\delta n} \\
&\leq Ke^{-cn}.
\end{align*}

\noindent which holds for $0<c<\log (2)$ and sufficiently small $\delta>0$. 

Putting all these bounds together, we see that 

\[\left| |G|^{-1}a^{n}\sum_{F\in\Sur^{*}(V,G)}\P(\tilde{F}X=0)-\frac{1}{|G|}\right|\leq Ke^{-cn}.\]

And as $\E(\Sur(S,G))=|G|^{-1}a^{n}\sum_{F\in\Sur^{*}(V,G)}\P(\tilde{F}X=0)$, this completes the proof.
\end{proof}

\subsection{Moments Determine the Distribution}

Finally, we show that Theorem~\ref{thm:moments} implies Theorem~\ref{thm:main}. To do this, we quote two further results of \cite{mw2}:

\begin{thm}
\label{thm:dist}
(Theorem 8.3 of \cite{mw}): Let $X_{n}$ and $X^{'}_{n}$ be sequences of random finitely generated abelian groups. Let $a$ be a positive integer and let $A$ be the set of abelian groups with exponent dividing $a$. Suppose that for every $G\in A$, there exists a value $M_{G}\leq|\wedge^{2}(G)|$ (where $\wedge^{2}(G)$ is the exterior square of $G$), such that
\[ \lim_{n\rightarrow\infty} \E(\#\Sur(X_{n},G))=M_{G}. \]

\noindent Then for every $H\in A$, $\lim_{n\rightarrow\infty}\P(X_{n}\otimes \Z/a\Z\cong H)$ exists, and for all $G\in A$ we have
\[ \sum_{H\in A} \lim_{n\rightarrow\infty}\P(X_{n}\otimes \Z/a\Z\cong H)\#\Sur(H,G)=M_{G}.\]

If for every $G\in A$, we also have $ \lim_{n\rightarrow\infty} \E(\#\Sur(X^{'}_{n},G)=M_{G})$, then for every $H\in A$
\[\lim_{n\rightarrow\infty}\P(X_{n}\otimes \Z/a\Z\cong H)=\lim_{n\rightarrow\infty}\P(X^{'}_{n}\otimes \Z/a\Z\cong H).\]
\end{thm}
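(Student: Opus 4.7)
Since $G$ has exponent dividing $a$, every homomorphism $X_n\to G$ factors through $X_n\otimes\Z/a\Z$, so $\#\Sur(X_n,G)=\#\Sur(X_n\otimes\Z/a\Z,G)$. Writing $p_H(n)=\P(X_n\otimes\Z/a\Z\cong H)$, the hypothesis reads
\[ M_G(n) := \E(\#\Sur(X_n,G)) = \sum_{H\in A} p_H(n)\,\#\Sur(H,G) \longrightarrow M_G. \]
My plan is a three-step moment-inversion argument: a formal inversion, a tightness argument on the countably infinite set $A$, and a limit passage, with the bound $M_G\leq|\wedge^2 G|$ doing all of the analytic work.

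For the formal step, order $A$ by $|H|$. Since $\#\Sur(H,G)=0$ unless $G$ is a quotient of $H$ (so in particular $|H|\geq|G|$), and $\#\Sur(G,G)=|\Aut(G)|$, the system is upper-triangular with invertible diagonal. Rearranging gives
\[ p_G(n)\,|\Aut(G)| = M_G(n) - \sum_{\substack{H\in A\\ |H|>|G|}} p_H(n)\,\#\Sur(H,G), \]
which determines $p_G(n)$ uniquely once we control the tail sum.

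The main obstacle is tightness of $\{X_n\otimes\Z/a\Z\}$ on $A$. For each prime $p\mid a$, applying the hypothesis with $G=\Z/p$ gives $M_{\Z/p}\leq|\wedge^2(\Z/p)|=1$, so using $\#\Sur(X,\Z/p)=p^{r_p(X)}-1$ when $r_p\geq1$ and $0$ otherwise, we deduce $\E(p^{r_p(X_n\otimes\Z/a\Z)})\leq 2$, and Markov's inequality yields a uniform-in-$n$ exponential tail bound on the $p$-rank of $X_n\otimes\Z/a\Z$. Since groups in $A$ decompose as a product over $p\mid a$ of $p$-Sylow parts each of bounded exponent, uniform tail bounds on all these ranks confine $X_n\otimes\Z/a\Z$ to a finite subset of $A$ up to arbitrarily small probability; hence for every $\varepsilon>0$ there is a finite $A_\varepsilon\subset A$ with $\liminf_n\sum_{H\in A_\varepsilon}p_H(n)\geq 1-\varepsilon$.

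With tightness in hand, a diagonal extraction yields a subsequence along which $p_H(n)\to p_H$ for every $H\in A$, with $\sum_H p_H=1$. To pass $M_G(n)=\sum_H p_H(n)\,\#\Sur(H,G)$ to the limit for each fixed $G$, I would use the same exponential tail bound together with the crude estimate $\#\Sur(H,G)\leq|G|^{\log_{p_{\min}}|H|}$ (with $p_{\min}$ the smallest prime dividing $a$) to produce a summable dominant, giving $M_G=\sum_H p_H\,\#\Sur(H,G)$ in the limit. The triangular inversion above then shows the $p_H$ are uniquely determined by $(M_G)_{G\in A}$, so every subsequential limit agrees and the full sequence $p_H(n)$ converges to $p_H$. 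The second assertion is immediate: matching moment limits for $X_n$ and $X_n'$ force matching $p_H$ through the same inversion.
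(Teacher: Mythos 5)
Your overall plan---reduce to $X_n\otimes\Z/a\Z$, set up the upper-triangular moment system over groups ordered by size, establish tightness, extract subsequential limits, pass to the limit, and invert---is the right shape, and the reduction, the identity $\#\Sur(X_n,G)=\#\Sur(X_n\otimes\Z/a\Z,G)$, and the triangularity observation are all correct. Tightness also goes through as you say: from $M_{\Z/p}\leq|\wedge^2(\Z/p)|=1$ one gets $\E(p^{\,r_p(X_n\otimes\Z/a\Z)})\leq 2+o(1)$, hence uniform exponential tails on every $p$-rank for $p\mid a$, hence tightness of the laws on $A$.

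The gap is in the limit-passage step, and it is not a small one. The dominant you propose is not summable. Take $a=p$ and $G=(\Z/p)^2$: the tail bound coming from $M_{\Z/p}\leq 1$ gives only $\P(r_p\geq k)\leq C p^{-k}$, while $\#\Sur((\Z/p)^k,(\Z/p)^2)=(p^k-1)(p^k-p)\asymp p^{2k}$, so your majorant $\sum_k(\text{tail bound})\cdot p^{2k}$ behaves like $\sum_k p^k$ and diverges. Without uniform integrability, Fatou gives only $\sum_H p_H\#\Sur(H,G)\leq M_G$ along a subsequence, with mass free to escape to infinity; the triangular rearrangement $p_G|\Aut(G)|=M_G-\sum_{|H|>|G|}p_H\#\Sur(H,G)$ is then an infinite upward recursion that does not pin down the $p_H$ without a convergence/uniqueness statement for the tail sum. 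This is exactly where the hypothesis $M_G\leq|\wedge^2 G|$ for \emph{all} $G\in A$ (not only $G=\Z/p$) must do the work: Wood's proof of the quoted theorem rests on a separate analytic lemma (Lemma 8.1 in \cite{mw}) showing that under these bounds the moment equations have a unique solution and the relevant sums over the lattice of finite abelian groups of bounded exponent converge absolutely, and that argument involves a careful count against $|\wedge^2 G'|$ over $G'\supseteq G$, not a single Markov bound. As written, your proposal invokes the $\wedge^2$ hypothesis only at $G=\Z/p$, which is far too weak, and the argument does not close.
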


Let $X_n=S(\Gamma(n))$. By Theorem~\ref{thm:moments}, $ \lim_{n\rightarrow\infty} \E(\#\Sur(X_{n},G))=\frac{1}{|G|}$.  Let $X^{'}_{n}=Y$ be our random group from Section~\ref{sec:intro}. As $\frac{1}{|G|}\leq 1 \leq|\wedge^{2}(G)|$, it suffices to show that $ \lim_{n\rightarrow\infty} \E(\#\Sur(X^{'}_{n},G))=\E(\#\Sur(Y,G))=\frac{1}{|G|}$ in order to prove Theorem~\ref{thm:main}. This is shown by Wood in \cite{mw2}:

\begin{lemma}
\label{lemma:udist}
(Lemma 3.2 of \cite{mw2}): Let $Y$ be a random group distributed as above. Then for any abelian group $G$, $\E(\#\Sur(Y,G))=\frac{1}{|G|}$.
\end{lemma}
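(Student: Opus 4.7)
The plan is to reduce the moment identity to the case of $p$-groups by using the independence of the Sylow factors $Y_p$, and then to establish the resulting $p$-group identity by a classical Cohen--Lenstra generating-function computation.

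First, write $G=\prod_p G_p$ for the Sylow decomposition. For any finite abelian group $H$, a homomorphism $H\to G$ is determined by its components $H_p\to G_p$ (the other Sylow summands of $H$ must map to zero for order reasons), and it is surjective onto $G$ iff each component is surjective onto $G_p$. Hence
\[ \#\Sur(H,G)=\prod_p \#\Sur(H_p,G_p). \]
Since by construction $Y=\prod_p Y_p$ with the $Y_p$ mutually independent, the expectation factors:
\[ \E[\#\Sur(Y,G)] = \prod_p \E[\#\Sur(Y_p,G_p)]. \]
So it suffices to prove the identity one prime at a time: for every prime $p$ and finite abelian $p$-group $G$,
\[ \E[\#\Sur(Y_p,G)] \;=\; Q_p\sum_{H}\frac{\#\Sur(H,G)}{|H|\,|\Aut(H)|} \;=\; \frac{1}{|G|}, \]
the sum running over isomorphism classes of finite abelian $p$-groups $H$.

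Second, for the $p$-group identity I would parametrize $H$ by partitions $\lambda$ via $H_\lambda=\bigoplus_i \Z/p^{\lambda_i}\Z$ and invoke P.~Hall's classical formulas for $|\Aut(H_\lambda)|$ and for $\#\Sur(H_\lambda,H_\mu)$ in terms of $\lambda$ and $\mu$. After substitution the claim becomes a $q$-series identity at $q=p^{-1}$. A convenient way to package the combinatorics is through the Dirichlet-type generating function
\[ F_G(s)=\sum_H \frac{\#\Sur(H,G)}{|H|^s\,|\Aut(H)|}, \]
for which the base case $G=1$ gives the familiar Cohen--Lenstra normalization $F_1(s)=\prod_{k\ge 1}(1-p^{-s-k})^{-1}$, so in particular $F_1(1)=Q_p^{-1}$. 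The target identity is the functional relation $F_G(s)=|G|^{-s}F_1(s)$; evaluating at $s=1$ and multiplying by $Q_p$ yields exactly $1/|G|$.

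The essential obstacle is therefore establishing $F_G(s)=|G|^{-s}F_1(s)$. The cleanest route I see is a Burnside-style double count of pairs $(H,\phi)$ with $\phi:H\twoheadrightarrow G$, grouped by the isomorphism class of the kernel $K=\ker\phi$: each such pair is an extension $0\to K\to H\to G\to 0$, so one enumerates extensions via $\mathrm{Ext}^1(G,K)$ while tracking stabilizers under $\Aut(H)$ and $\Aut(K)$. After the bookkeeping, the $K$-sum contributes the base-case factor $1/Q_p$ and the residual extension data contributes precisely the factor $|G|^{-s}$. This is the heart of the Cohen--Lenstra heuristic and is executed in detail in \cite{mw2}; rather than redo it, one can simply quote the result.
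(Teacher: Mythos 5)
Your outline is correct, and since the paper does not reprove this lemma but simply cites Lemma 3.2 of \cite{mw2}, there is nothing in the paper itself to compare against; what you have reconstructed is the standard Cohen--Lenstra moment computation, which is indeed the content of the cited result. The reduction to one prime at a time via $\#\Sur(H,G)=\prod_p\#\Sur(H_p,G_p)$ and independence of the $Y_p$ is clean (and the product is effectively finite since $G_p=1$ for all but finitely many $p$), and the functional equation $F_G(s)=|G|^{-s}F_1(s)$ is exactly the right target.

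Two small points worth tightening if you were to write this out in full. First, the factor $|G|^{-s}$ does not really come ``from the residual extension data'': it comes immediately from the decomposition $|H|^{-s}=|K|^{-s}|G|^{-s}$ once you group $(H,\phi)$ by $K=\ker\phi$; the role of the extension/automorphism bookkeeping is to show that, for each fixed $K$, the weighted count $\sum_{H}\#\{\phi:H\twoheadrightarrow G,\ \ker\phi\cong K\}/|\Aut(H)|$ equals exactly $1/|\Aut(K)|$. Second, that last step is where the actual arithmetic input lives: after passing to short exact sequences one gets $|\operatorname{Ext}^1(G,K)|/|\Hom(G,K)|$, and the identity $|\operatorname{Ext}^1(G,K)|=|\Hom(G,K)|$ for finite abelian groups is the linchpin that makes everything collapse to $1$. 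You should make that self-duality explicit rather than leave it implicit in the phrase ``after the bookkeeping,'' since without it the $K$-sum would not reduce to $F_1(s)$.
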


This completes the proof of Theorem~\ref{thm:main}.

\bibliography{bib}
\bibliographystyle{plain}

\end{document}